\newtheorem{thm}{Theorem}[section]
\theoremstyle{definition}
\theoremstyle{remark}
\numberwithin{equation}{section}
\newcommand{\olsi}[1]{\,\overline{\!{#1}}} 
\begin{document}
\title[Neural network Fokker-Planck solver]{A deep learning method for
solving Fokker-Planck equations}
\author{Jiayu Zhai}
\address{Jiayu Zhai: Department of Mathematics and Statistics, University
  of Massachusetts Amherst, Amherst, MA, 01002, USA}
\email{zhai@math.umass.edu}
\author{Matthew Dobson}
\address{Matthew Dobson: Department of Mathematics and Statistics, University
  of Massachusetts Amherst, Amherst, MA, 01002, USA}
\email{dobson@math.umass.edu}
\author{Yao Li}
\address{Yao Li: Department of Mathematics and Statistics, University
  of Massachusetts Amherst, Amherst, MA, 01002, USA}
\email{yaoli@math.umass.edu}

\thanks{Yao Li is partially supported by NSF DMS-1813246.}

\keywords{Stochastic differential equation, Monte Carlo simulation,
  invariant measure, coupling method, data-driven and machine learning methods}

\begin{abstract}
  The time evolution of the probability distribution of a stochastic differential equation
  follows the Fokker-Planck equation, which usually has an unbounded, 
  high-dimensional domain. Inspired by our early study in \cite{li2018data}, we propose a
  mesh-free Fokker-Planck solver, in which the solution to the Fokker-Planck
  equation is now represented by a neural network. The presence of the differential
  operator in the loss function improves the accuracy of the neural
  network representation and reduces the the demand of data in the
  training process. Several high dimensional numerical examples are
  demonstrated. 
\end{abstract}

\maketitle

\maketitle

\section{Introduction}
Stochastic differential equations are widely used to model dynamics
of real world problems in the presence of uncertainty (such as events 
driving stock markets) or in the presence many small forces whose
origins are not all tracked (such as a solvent acting on a larger 
molecule).  The instantaneous and cumulative effects of
the noise on the dynamics can be visualized through
the transient and invariant probability distribution of the solution
process, respectively. These probability measures can be analytically
described by the Fokker-Planck equations (also known as the Kolmogorov
forward equation).   It is well known for the Langevin and Smoluchowski equations
 that if the deterministic part of
the stochastic differential equation is a gradient flow, the invariant measure is the Gibbs
measure whose probability density function is explicitly
given. However in general, the Fokker-Planck equation can only be
solved numerically. Traditional numerical PDE solvers do not work well
for Fokker-Planck equations due to both the lack of a suitable boundary condition and the
curse of dimensionality (see Section \ref{Sec:Problem description} for
detailed explanation). Although many novel methods are introduced to
resolve this difficulty, solving high dimensional Fokker-Planck
equation remains as a challenge.

With the rapid growth of accessibility of data and demand for its
analysis in various application realms, such as computer vision,
speech recognition, natural language processing, and game
intelligence, machine learning methods prove their strong performance
in representing these high dimensional models. Mathematicians have also
made quite many efforts on proving the error estimates of neural
network representations using function spaces like Sobolev spaces,
Besov spaces, and Barron spaces
\cite{Ingo2020Error,Petersen2018Optimal,Taiji2019Adaptivity,E2019Barron}. Although
these results are still far away from explaining their strong
performance, the success of machine learning methods in modelling high
dimensional models in big data applications motivates their consideration
as a numerical scheme for solving mathematics problems, particularly in partial differential
equations. Among others, we highlight the
references \cite{Sirignano2018DGM, Beck2018Kolmogorov,
  Beck2019BackwardSDEs, Raissi2019, wu2018physics} that are related to this
work. 

In \cite{li2018data}, the author 
proposed a novel data-driven solver to solve the Fokker-Planck
equation. The key idea is to remove the reliance on the boundary
conditions and construct a constrained optimization problem that uses
the Monte Carlo simulation data as the reference. This approach is
still grid-based, so it (including its extension in
\cite{dobson2019efficient}) does not work well for high dimensional
problems.  Motivated by the progress of applying artificial neural
network to traditional computational problems, in this paper we
propose a mesh-free version of the data-driven solver studied in
\cite{li2018data, dobson2019efficient}. Similar to those works, the focus here is on the
steady state Fokker-Planck equation as the invariant probability
measure plays a very important role in applications. The case of
time-dependent Fokker-Planck equation is analogous. All our algorithms
can be applied to time-dependent problems with some minor
modifications.

The key idea in this paper is to replace the constrained optimization
problem studied in \cite{li2018data} by an unconstrained optimization
problem, as it is not easy to use neural networks to study the
constrained optimization problem on a high dimensional hyperplane. We
first propose the unconstrained optimization problem and prove the
convergence of its minimizer to the true Fokker-Planck solution for the
discrete case. Then we propose an analogous loss function that is
trainable by artificial neural networks. Our further studies find that
the Fokker-Planck operator $\mathcal{L}$ plays an important role in
the training. It dramatically increases the tolerance of noisy
simulation data and reduces the amount of simulation data used in the
training. In general, we only need $10^{2}$ to $10^{4}$ ``reference
points'' with probability densities on them to train the neural
network. And the probability density function obtained by Monte Carlo simulation does
not have to be very accurate. (see Section
\ref{Sec:Algorithm} for explanation and Section \ref{Sec:Numerical
  experiments} for numerical demonstrations).  The reduction of demand for simulation data is significant since the
stochastic dynamical systems in applications usually have high
dimensionality, whereas the training data collected from either Monte
Carlo simulation or experiments has high cost. This has some
similarity with the situation of the physics-informed neural network 
\cite{Raissi2019, wu2018physics}. In this sense, prior knowledge of the
system \eqref{SDE} or the corresponding Fokker-Planck equation
\eqref{FPE} can serve as a smoother and a law for the solution to
follow. Then we only need a small data set as a reference to locate
the solution near the empirical probability distribution.  

In Section \ref{Sec:Problem description}, we describe the problem
setting, the unconstrained optimization we study, and
the idea of using neural network representation. All training and sampling
algorithms are studied in Section \ref{Sec:Algorithm}. In
Section \ref{Sec:Numerical experiments}, we use several numerical
examples to demonstrate the main feature of our neural network
Fokker-Planck solver.

\section{Preliminaries and motivation}\label{Sec:Problem description}
\subsection{Fokker-Planck equation and data-driven solver.}
\label{Sec: FPE solver}
We consider the stochastic differential equation
\begin{equation}\label{SDE}
d\boldsymbol{X}_{t} = f(\boldsymbol{X}_{t}) dt + \sigma(\boldsymbol{X}_{t}) d\boldsymbol{W}_{t} \,,
\end{equation}
where $f$ is a vector field in $\mathbb{R}^{n}$, $\sigma$ is a
coefficient matrix, and ${\bm W}_{t}$ is an $n$-dimensional white
noise. The time evolution of probability density of the solution process
${\bm X}_{t}$ is characterized by the Fokker-Planck equation, which is also known as the Kolmogorov forward equation 
\begin{equation}\label{FPE}
u_t = \mathcal{L}u = -\sum_{i = 1}^{n} (f_{i}u)_{x_{i}} + \frac{1}{2}\sum_{i,j = 1}^{n}(\Sigma_{i,j}u)_{x_{i} x_{j}} \,,
\end{equation}
where $u(\boldsymbol{x},t)$ denotes the
probability density function of the stochastic process ${\bm X}_t$ at time
$t$, $\Sigma = \sigma^{T}\sigma$ is the diffusion coefficient, and subscripts $t$ and $x_i$ denote partial derivatives. In this paper, we focus on the invariant probability measure of \eqref{SDE}, whose density function $\mathbb{R}^n \ni \boldsymbol{x}\mapsto u(\boldsymbol{x})\in\mathbb{R}$ satisfies the stationary Fokker-Planck equation
\begin{equation}\label{FPE_unique}
\left\{\begin{array}{ll}
\mathcal{L}u = 0\\
\int_{\mathbb{R}^{n}} u \, d\boldsymbol{x} = 1
\end{array}\right.
\end{equation}
Throughout the present paper, we assume the existence and uniqueness
of the solution to the stationary Fokker-Planck equation.

The Fokker-Planck equation is defined on an unbounded domain with the 
constraint $\int_\Omega u\,d\boldsymbol{x} =1$. Since the numerical
domain has to be bounded, it is not easy to give a suitable boundary
condition to describe the ``zero-boundary condition at infinity''. In
practice, one can assume a zero boundary condition on a
domain that is large enough to cover all high density areas with
sufficient margin. A
classic computational method, e.g., finite element method, is then
applied to find a non-trivial solution. One usually needs to solve a
least square problem because of the constraint $\int_{\mathbb{R}^{n}}
u\,d\boldsymbol{x} =1$. In general, the computational cost of
classical PDE solver is too high to be practical when $n \geq 3$. The
other way to solve the Fokker-Planck equation is the Monte Carlo
method, which uses the fact that the empirical distribution of a long
trajectory converges to the solution to the steady state Fokker-Planck
equation. The Monte Carlo method is very simple regardless of the
boundary condition. One only needs to divide the numerical domain into
lots of ``bins'', run a long trajectory of the equation \eqref{SDE},
and count the number of samples in each bin. However, the solution
from the Monte Carlo method is much less accurate. 

In \cite{li2018data}, the author
introduced a data-driven method that
overcomes the drawbacks of the two aforementioned methods, so that one can solve
the Fokker-Planck equation locally and does not rely on the boundary
condition any more. Later in \cite{dobson2019efficient}, the authors proved
the convergence of the method and improved the method by introducing a ``blocked
version'' that uses a divide-and-conquer strategy. Let $D \subset
\mathbb{R}^{n}$ be the numerical domain. Assume there is a rectangular grid
$\{x_{i}\}_{i = 1}^{N^{n}}$ defined in $D$ with $N$ grid points on
each dimension. The key idea of
this data-driven method is to solve the optimization problem 
\begin{equation}\label{opt}
\begin{array}{rl}
\mathop{\min}\limits_{\boldsymbol{u}} & \| \boldsymbol{u} - \boldsymbol{v} \|_{2} \\
\text{subject to} & \boldsymbol{A} \boldsymbol{u} = \boldsymbol{0},
\end{array}
\end{equation}
where $\boldsymbol{A}\in\mathbb{R}^{(N-2)^n\times N^n}$ is a
discretization of the Fokker-Planck operator $\mathcal{L}$ on $D$ without boundary condition,
and $\boldsymbol{v}\in\mathbb{R}^{N^n}$ is a Monte Carlo approximation
obtained by a numerical simulation of \eqref{SDE}. An entry ${\bm
  v}_{i}$ of the vector ${\bm v}$ is the
probability that a long trajectory stays in a small neighborhood of
$x_{i}$, which is usually a low accuracy approximation of the invariant
measure. Each row of matrix ${\bm A}$ is obtained by a discretization of the
Fokker-Planck equation (using the finite difference method) at a
interior point $x_{i}$. Matrix ${\bm A}$ only has $(N-2)^{n}$ rows but $N^{n}$ columns because we
do not know the boundary value. The motivation is that an inaccurate
Monte Carlo solution can effectively replace the boundary value. The solution to the
optimization problem \eqref{opt} projects the Monte Carlo solution
${\bm v}$ to the null space of ${\bm A}$. The projection works as a ``smoother''
that not only dramatically removes the error term from the Monte Carlo
approximation, but also pushes most error terms to the boundary of the
domain. See the proof and discussion in \cite{dobson2019efficient} for
details.

\subsection{An alternative optimization problem.} \label{Sec:Analysis}
To use artificial neural network approximation, we need to convert
the optimization problem in equation \eqref{opt} to an unconstrained
optimization problem. If we use the penalty
method with penalty parameter $1$ for \eqref{opt}, we have a new
optimization problem
\begin{equation}\label{loss_disc}
\mathop{\min}\limits_{\boldsymbol{u}}\quad  \|\boldsymbol{A} \boldsymbol{u}\|_{2}^{2} + \| \boldsymbol{u} - \boldsymbol{v} \|_{2}^{2},
\end{equation}
where $\boldsymbol{A}$ and $\boldsymbol{v}$ are the same as in equation
\eqref{opt}. We claim that the new optimization problem
\eqref{loss_disc} has a similar effect as the original one in
\eqref{opt}. 

To compare the result, we choose the numerical solution obtained by
the finite difference method, denoted by ${\bm u}^{*}$, as the
baseline, because we have ${\bm A} {\bm u}^{*} = {\bm 0}$. See Appendix
\ref{LAproof} for a more precise description of ${\bm u}^{*}$. Let $\bar{\bm u}$ be the
minimizer of the optimization problem \eqref{loss_disc}. Denote the
error terms of the Monte Carlo simulation and the optimizer by ${\bm e} = {\bm v} - {\bm u}^{*}$ and ${\bm
  z} = \bar{\bm u} - {\bm u}^{*}$ respectively. Let ${\bm A} = {\bm
  A}_{h}$ where $h$ is the grid size of discretization. We make the following assumptions to conduct the
convergence analysis.

\begin{enumerate}[label=\textbf{(A\arabic*)}]
\item\label{Assumption A1} Random vector $\boldsymbol{e}$ has i.i.d. entries whose identical expectation and variance
  are $0$ and $\zeta^2$ respectively. 
\item\label{Assumption A2} Let $\lambda^{h}_{1}, \cdots,
  \lambda^{h}_{r}$ be all nonzero eigenvalues of
  $\boldsymbol{A}_{h}^T\boldsymbol{A}_{h}$. Let $Q(h) = h^{n} \sum_{i = 1}^{r} \left(\frac{1}{1 + h^{-4}
  \lambda^{h}_{i}}\right)^{2} $. We have $Q(h) \rightarrow 0$ as $h
\rightarrow 0$.
\end{enumerate}


\begin{thm}
  \label{convergence}
  If \ref{Assumption A1} and \ref{Assumption A2} hold, then
$$
 \lim_{h \rightarrow 0} \frac{ \mathbb{E}[\| {\bm z} \|^{2}]}{ \mathbb{E}[\| {\bm e} \|^{2}]} = 0\,.
$$
\end{thm}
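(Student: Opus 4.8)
The plan is to use that \eqref{loss_disc} is an unconstrained strictly convex quadratic, so its minimizer is given in closed form by the normal equations, and then to reduce the ratio of expected squared errors to a single trace against the spectrum of $\boldsymbol{A}_h^{T}\boldsymbol{A}_h$, which is exactly the object controlled by Assumption \ref{Assumption A2}.

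First I would record the first-order optimality condition. Writing $\boldsymbol{G}=h^{-4}\boldsymbol{A}_h^{T}\boldsymbol{A}_h$ for the Gram matrix of the normalized discrete operator $h^{-2}\boldsymbol{A}_h$ (the factor $h^{-2}$ of the second-order stencil being recorded explicitly, as in Appendix \ref{LAproof}), the minimizer $\bar{\boldsymbol{u}}$ of \eqref{loss_disc} solves $(\boldsymbol{G}+\boldsymbol{I})\bar{\boldsymbol{u}}=\boldsymbol{v}$, which is uniquely solvable because $\boldsymbol{G}+\boldsymbol{I}$ is positive definite. Since $\boldsymbol{A}_h\boldsymbol{u}^{*}=\boldsymbol{0}$ gives $\boldsymbol{G}\boldsymbol{u}^{*}=\boldsymbol{0}$ and hence $(\boldsymbol{G}+\boldsymbol{I})\boldsymbol{u}^{*}=\boldsymbol{u}^{*}$, subtracting yields the identity $(\boldsymbol{G}+\boldsymbol{I})\boldsymbol{z}=\boldsymbol{v}-\boldsymbol{u}^{*}=\boldsymbol{e}$, i.e. $\boldsymbol{z}=(\boldsymbol{G}+\boldsymbol{I})^{-1}\boldsymbol{e}$. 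Thus the optimizer error is the image of the Monte Carlo error under the resolvent $(\boldsymbol{G}+\boldsymbol{I})^{-1}$, which acts as a spectral low-pass filter.

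Next I would take expectations using Assumption \ref{Assumption A1}. As $\boldsymbol{e}$ has mean zero and covariance $\zeta^{2}\boldsymbol{I}$, every fixed matrix $\boldsymbol{B}$ satisfies $\mathbb{E}[\|\boldsymbol{B}\boldsymbol{e}\|^{2}]=\zeta^{2}\,\mathrm{tr}(\boldsymbol{B}^{T}\boldsymbol{B})$; taking $\boldsymbol{B}=(\boldsymbol{G}+\boldsymbol{I})^{-1}$ and $\boldsymbol{B}=\boldsymbol{I}$ gives
\[
\frac{\mathbb{E}[\|\boldsymbol{z}\|^{2}]}{\mathbb{E}[\|\boldsymbol{e}\|^{2}]}=\frac{\mathrm{tr}\big((\boldsymbol{G}+\boldsymbol{I})^{-2}\big)}{N^{n}}.
\]
Diagonalizing the symmetric matrix $\boldsymbol{G}$, whose nonzero eigenvalues are $h^{-4}\lambda_1^{h},\dots,h^{-4}\lambda_r^{h}$ and whose zero eigenvalue has multiplicity $N^{n}-r$, the trace splits as
\[
\mathrm{tr}\big((\boldsymbol{G}+\boldsymbol{I})^{-2}\big)=\sum_{i=1}^{r}\Big(\frac{1}{1+h^{-4}\lambda_i^{h}}\Big)^{2}+(N^{n}-r).
\]
Because the grid fills a fixed domain $D$, one has $h^{n}N^{n}=|D|$ up to a constant, so the first group equals $Q(h)/|D|$ and tends to $0$ by Assumption \ref{Assumption A2}: on the range of $\boldsymbol{A}_h$ the eigenvalues $h^{-4}\lambda_i^{h}$ blow up and the resolvent annihilates those error components.

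The hard part is the remaining term $(N^{n}-r)/N^{n}$, which comes from $\ker\boldsymbol{A}_h$: there the resolvent acts as the identity and performs no denoising at all, and Assumption \ref{Assumption A2} says nothing about it. To control it I would appeal to the finite-difference structure described in Appendix \ref{LAproof}, namely that $\boldsymbol{A}_h\in\mathbb{R}^{(N-2)^{n}\times N^{n}}$ has full row rank, so that $r=(N-2)^{n}$ and $\dim\ker\boldsymbol{A}_h=N^{n}-(N-2)^{n}=O(N^{n-1})$ is a vanishing fraction of $N^{n}$; explicitly $(N^{n}-r)/N^{n}=1-(1-2/N)^{n}\to0$ as $h\to0$ (so $N\to\infty$). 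Adding the two contributions gives the claim. I expect this kernel/rank estimate to be the main obstacle, as it is the only step not supplied by the stated assumptions and the only one that genuinely uses the structure of the discretized operator; the rest is the routine convex-optimization and trace computation.
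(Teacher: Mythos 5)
Your proposal is correct and follows essentially the same route as the paper: derive $\boldsymbol{z}=(\boldsymbol{I}+\boldsymbol{A}^{T}\boldsymbol{A})^{-1}\boldsymbol{e}$ from the normal equations and $\boldsymbol{A}\boldsymbol{u}^{*}=\boldsymbol{0}$, convert $\mathbb{E}[\|\boldsymbol{z}\|^{2}]$ to a trace over the spectrum of $h^{-4}\boldsymbol{A}_h^{T}\boldsymbol{A}_h$, kill the nonzero-eigenvalue part with Assumption \ref{Assumption A2}, and bound the kernel contribution by $(N^{n}-(N-2)^{n})/N^{n}\to 0$ using the full row rank of $\boldsymbol{A}_h$. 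The only (correct) addition is your explicit observation that the kernel term is the step not covered by the stated assumptions, which the paper handles identically but without comment.
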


{\it Remark: }One needs to multiply the volume of $n$-dimensional grid box when
calculating the discrete $L^{2}$ error. Hence the discrete $L^{2}$ error of ${\bm v}$ is $h^{n/2}
\mathbb{E}[\| {\bm e}\|] = \mathrm{const} \cdot \zeta$. Theorem
\ref{convergence} implies that the error of $\bar{{\bm u}}$ converges
to zero as $h \rightarrow 0$.

Assumption \ref{Assumption A1} assumes the error term ${\bm
      e}$ has i.i.d entries. This is because the error terms of Monte Carlo solutions have very
    little spatial correlation. See Figure \ref{fig1} bottom left
    panel as an example of the spatial distribution of the error term
    of a Monte Carlo solution. The real Monte Carlo simulation has
    smaller error than that in Assumption \ref{Assumption
      A1}, as the absolute error is smaller in the low density area. Assumption \ref{Assumption
      A2}  is due to technical reasons. See Appendix \ref{LAproof} for
    more discussions.

\section{Neural network train algorithms}\label{Sec:Algorithm}
In Theorem \ref{convergence}, we show that the solution to the
unconstrained optimization problem \eqref{loss_disc} converges to the true solution of the
Fokker-Planck equation. Since it is very difficult to do spatial
discretization in high dimension, it is natural to consider the mesh-free
version of the optimization problem \eqref{loss_disc}, in which the
variable $u$ is represented by an artificial neural network.

\subsection{Loss function.} \label{Sec:Loss}
Now let
${\bm \tilde{u}}( {\bm x}, {\bm \theta})$ be an approximation of ${\bm u}$ that is
represented by an artificial neural network with parameter ${\bm
  \theta}$. Inspired by equation \eqref{loss_disc}, we work on the squared error loss function 
\begin{equation}\label{loss}
L(\boldsymbol{\theta}) = \frac{1}{N^{X}}\sum_{i = 1}^{N^{X}}(\mathcal{L}{\bm
  \tilde{u}}(\boldsymbol{x}_i,\boldsymbol{\theta}))^2 +
\frac{1}{N^{Y}}\sum_{j = 1}^{N^{Y}}({\bm
  \tilde{u}}(\boldsymbol{y}_j,\boldsymbol{\theta})-v(\boldsymbol{y}_j))^2
:= L_{1}( {\bm \theta}) + L_{2}({\bm \theta}),
\end{equation}
with respect to $\boldsymbol{\theta}$, where
$\boldsymbol{x}_i\in\mathbb{R}^n, i = 1, 2, \dots, N^{X}$ and
$\boldsymbol{y}_j\in\mathbb{R}^n, j =1, 2, \dots, N^{Y}$ are collocation
points sampled from $D$, and $v(\boldsymbol{y}_j)$ is the Monte Carlo
approximation from a numerical simulation of \eqref{SDE} at ${\bm y}_j$. This loss function \eqref{loss} is in fact the Monte Carlo integration of the following functional
\begin{equation}\label{loss_cont}
J(u) = \|\mathcal{L}u\|^{2}_{L^2(D)} + \|u-v\|^{2}_{L^2(D)},
\end{equation}
which can be seen as the continuous version of the discrete
optimization problem \eqref{loss_disc}.

The loss function \eqref{loss} has two parts. The minimization of
$L_{1}( {\bm \theta})$ is to generate
parameters $\boldsymbol{\theta}^{*}$ that guides the neural network
representation ${\bm \tilde{u}}(\boldsymbol{x},\boldsymbol{\theta}^{*})$ to
fit the Fokker-Planck differential equation $\mathcal{L}{\bm \tilde{u}}=0$
empirically at the training points $\boldsymbol{x}_i, i=1,2,\dots,N^{X}$
(we use automatic differentiation here to generate derivatives of
${\bm \tilde{u}}$ with respect to $\boldsymbol{x}$ using the same parameters
$\boldsymbol{\theta}$). It works as a regularization mechanism such
that the resultant neural network representation 
${\bm \tilde{u}}(\boldsymbol{x},\boldsymbol{\theta}^{*})$ approximates one of the infinitely
many solutions of
the Fokker-Planck equation without boundary conditions. 
Similar to \cite{li2018data} and \cite{dobson2019efficient}, the
second part $L_{2}( {\bm \theta})$ of the loss
function serves as a reference for the solution. It is the low accuracy
Monte Carlo approximation that  guides the neural network
training process to converge to the desired solution, namely the one satisfying 
the stationary Fokker-Planck equation~\eqref{FPE_unique}. The accuracy of
$v( {\bm y}_{i})$ does not have to be very high.  As shown in
\cite{li2018data, dobson2019efficient} and Section \ref{Sec:Analysis},
the optimization problem removes spatially uncorrelated noise in the
Monte Carlo, so that the minimizer is a good approximation of the exact solution of
the Fokker-Planck equation.

Let $\mathfrak{X} := \{\boldsymbol{x}_i;i=1,2,\dots,N^{X}\}$ and
$\mathfrak{Y} := \{\boldsymbol{y}_j;j=1,2,\dots,N^{Y}\}$ be two training
sets that consists of collocation points. To distinguish them, we call
$\mathfrak{X}$ the ``training set'' and $\mathfrak{Y}$ the ``reference
set''. We find that these two sets do not have to be
very large. In our simulations $N^{X}$ ranges from $10^{4}$ to
$10^{5}$, while $N^{Y}$ ranges from $10^{2}$ to $10^{4}$.  This loss
function can be easily trained in a simple feedforward neural network
architecture. (See Appendix \ref{Detail:NN}.) We remark that
the choice of loss function has some similarity to the so called physics-informed
neural network (PINN) studied in \cite{Raissi2019,wu2018physics}. The first
part $\|\mathcal{L}u\|^{2}_{L^2(D)}$ serves a similar role by using the
differential operator from the physics laws there, whereas the second
part of the loss function plays a similar role as the boundary and
initial data in PINN.

The neural network approximation learns the differential operator over
collocation points and
learns the probability density function from the reference data points. It is proved
in many related works that it works effectively to recover a
complicated solution function (see \cite{Raissi2019,Sirignano2018DGM,wu2018physics}). To further accelerate the
training process, we introduce a ``double shuffling'' method that only
uses a small batch of $\mathfrak{X}$ and $\mathfrak{Y}$ in each
iteration to update the parameter. Since $L_{1}$ and $L_{2}$ could have very
different magnitude, in each iteration, we use Adam optimizer~\cite{Kingma2015Adam} to train
$L_{1}$ and $L_{2}$ and update the parameter ${\bm \theta}$ separately (Because Adam optimizer is
invariant to rescaling. See \cite{Kingma2015Adam}.) This method avoids the trouble of rebalancing
the weight of $L_{1}$ and $L_{2}$ during the neural network training. See Algorithm \ref{Neural network training}
for detailed implementation of the ``double shuffling'' method.

\begin{algorithm}[h]
\caption{Neural network training}
\label{Neural network training}
\begin{algorithmic}[1]
\Require
Training set $\mathfrak{X}$ and reference set $\mathfrak{Y}$.
\Ensure
Minimizer $\boldsymbol{\theta}^{*}$ and ${\bm \tilde{u}}(\boldsymbol{x},\boldsymbol{\theta}^{*})$.
\State Initialize a neural network representation ${\bm \tilde{u}}(\boldsymbol{x},\boldsymbol{\theta})$ with undetermined parameters $\boldsymbol{\theta}$.
\State\label{alg1step2}Run Monte Carlo simulation to get an
approximate density $v(\boldsymbol{y}_j)$ at each
reference data points ${\bm y}_{j}, j =1, 2, \dots, N^{Y}$.
\State\label{alg1step3}Pick a mini-batch in $\mathfrak{X}$, calculate the mean
gradient of $L_{1}$, and use the mean gradient to update ${\bm \theta}$
\State\label{alg1step4}Pick a mini-batch in $\mathfrak{Y}$, calculate the mean
gradient of $L_{2}$, and use the mean gradient to update ${\bm
  \theta}$.
\State Repeat steps \ref{alg1step3} and \ref{alg1step4} until the losses of $L_{1}$ and $L_{2}$
are both small enough. 
\State Return $\boldsymbol{\theta}^{*}$ and ${\bm \tilde{u}}(\boldsymbol{x},\boldsymbol{\theta}^{*})$.
\end{algorithmic}
\end{algorithm}

\subsection{Sampling collocation points and reference data.} \label{Sec:Collocation}For many stochastic
dynamical systems \eqref{SDE}, the invariant probability measure is 
concentrated near some small regions or low dimensional manifolds, while the
probability density function is close to zero far away. Hence samples
of the collocation points in $\mathfrak{X}$
and $\mathfrak{Y}$ must effectively represent the
concentration of the invariant probability density function. The solution is to use the dynamics of the system to choose
representative $\mathfrak{X}$ and $\mathfrak{Y}$. We run a numerical trajectory of the
stochastic differential equation \eqref{SDE}, and pick $\alpha\%$ of
the collocation points $\boldsymbol{x}_j$ and ${\bm y}_{j}$ from this
trajectory. When sampling from the long trajectory, we set up an
``internal burn-in time'' $s_{0}$ and only sample at time $\{ n s_{0}
\}_{n = 1, 2, \cdots}$ to avoid samples being too close to each
other. Then
to represent the complement set so that the network can learn small
values from it, we sample the other $1-\alpha\%$ of the collocation points
$\boldsymbol{x}_j$ and ${\bm y}_{j}$ from the uniform distribution on $D$. Since the
concentration part preserves more information of the invariant
distribution density, we usually set $\alpha=50\sim90$. See Algorithm
\ref{Data collocation sampling} for the full detail. 

\begin{algorithm}[h]
\caption{Data collocation sampling}
\label{Data collocation sampling}
\begin{algorithmic}[1]
\Require
Rate $\alpha\in[0.5,0.9]$.
\Ensure
Training collocation points $\boldsymbol{x}_i, i = 1, 2, \dots, N^{X}$ (or $\boldsymbol{y}_j, j =1, 2, \dots, N^{Y}$).
\State Initialize $\boldsymbol{X}_{0}$.
\State Run a numerical trajectory of \eqref{SDE} to time $t_{0}$ to
``burn in''.
\State Choose an internal ``burn in'' time $s_{0}$
\For {$i = 1$ to $N^{X}$}
\State Generate a random number $c_i\sim U([0,1])$. 
\If {$c_i\leq\alpha$}
             \State Let $t_{i} = t_{i-1} + s_{0}$
		\State Run the numerical trajectory of \eqref{SDE} up
                to time $t_{i}$
		\State Let $\boldsymbol{x}_i=\boldsymbol{X}_{t_{i}}$.
                \Else
                \State Let $t_{i} = t_{i-1}$.
		\State Generate a random point $\boldsymbol{x}_i\sim U(D)$.
	\EndIf
\EndFor
\State Return $\boldsymbol{x}_i, i = 1, 2, \dots, N^{Y}$.
\end{algorithmic}
\end{algorithm}

It remains to discuss how to sample the probability density $v( {\bm
  y}_{i})$ for ${\bm y}_{i} \in \mathfrak{Y}$. If the dimension is
low, one can sample $v( {\bm y}_{i})$ use grid-based approaches as in
\cite{li2018data}. For higher dimensional problems, some improvements
on sampling techniques are needed. A memory-efficient Monte Carlo
sampling algorithm for higher dimensional problems is discussed in
Appendix \ref{Sec:MCforHD}. For some stochastic differential equations
with conditional linear structure, a conditional Gaussian sampler
developed in \cite{chen2018efficient} can be used. See Appendix \ref{Sec:CG} for the
full detail.

\section{Numerical examples}\label{Sec:Numerical experiments}
In this section, we use three numerical examples with explicit exact
solution to demonstrate several properties of our Fokker-Planck
solver. Then a six dimensional example is used to demonstrate its
performance in higher dimensions.

\subsection{A 2D ring density}\label{Sec:2dring}
Consider a two dimensional stochastic gradient system
\begin{equation}\label{eq:2dring}
\left\{\begin{array}{l}
dX_t = (-4X_t(X_t^2+Y_t^2-1)+Y_t)\,dt + \sigma\,dW^x_t,\\
dY_t = (-4Y_t(X_t^2+Y_t^2-1)-X_t)\,dt + \sigma\,dW^y_t,\\
\end{array}\right.
\end{equation}
where $W^x_t$ and $W^y_t$ are independent Wiener processes, and we choose diffusion coefficient $\sigma=1$. 
The drift part of equation \eqref{eq:2dring} is a gradient flow of the potential function
$$V(x,y) = (x^2+y^2-1)^2$$
plus a rotation term orthogonal to the equipotential lines of $V$.  Hence the probability density function of the invariant measure of \eqref{eq:2dring} is
$$u(x,y)=\frac{1}{K}e^{-2V(x,y)/\sigma^2},$$
where $K=\pi\int_{-1}^{\infty}e^{-2t^2/\sigma^2}\,dt$ is the
normalization parameter. Note that the orthogonal rotation term does
not change the invariant probability density function. This can be verified by substituting $u(x,y)$ into the Fokker-Planck equation of \eqref{eq:2dring}.

\begin{figure}[htbp]
  {\includegraphics[width=\linewidth]{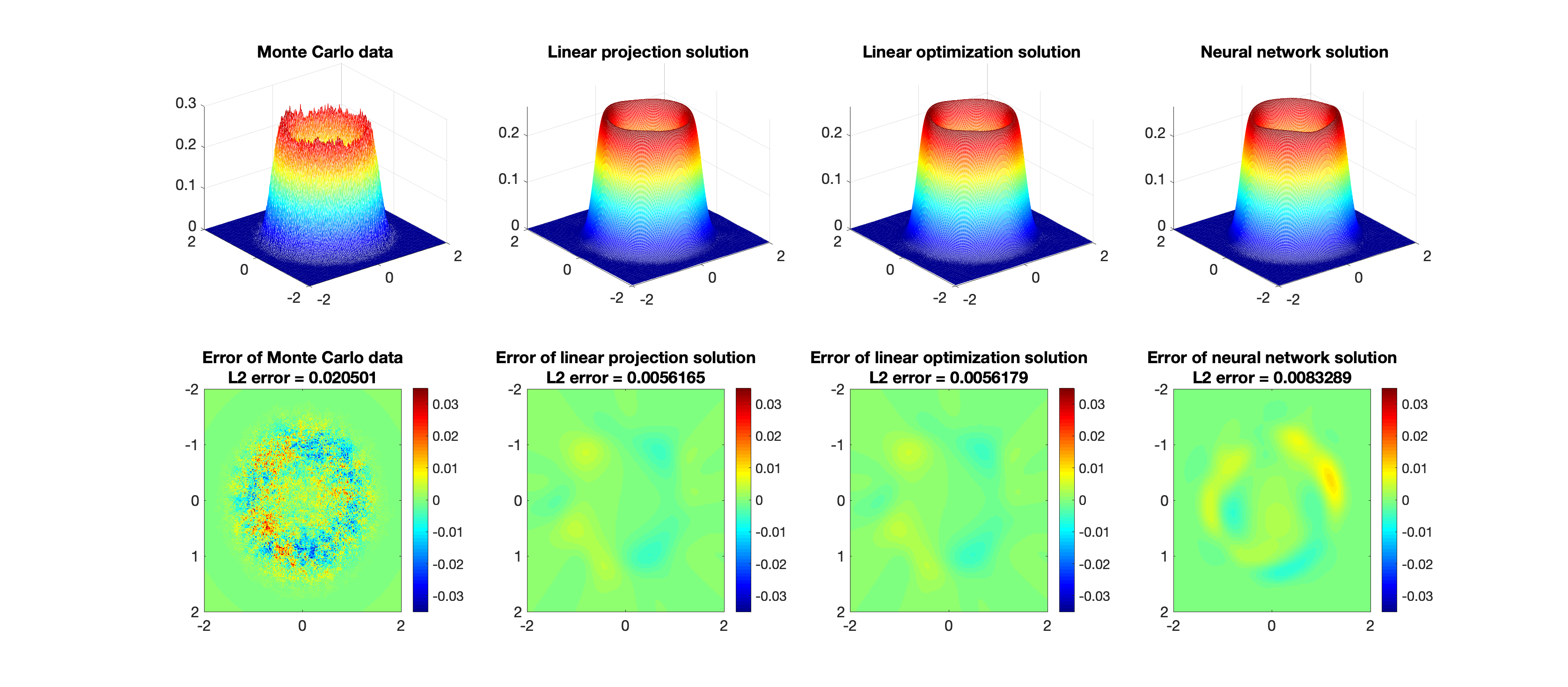}}
\caption{
\label{fig1}A comparison of probability density function of the invariant measure obtained by Monte Carlo simulation, linear
projection method, linear optimization method, and optimization through an artificial neural network. First row: probability density functions. Second row: Distributions of error against the exact solution. 
(Grid size = $200 \times 200$. Sample size of Monte Carlo: $10^{7}$.)
}
\end{figure}

Our first goal  is to compare the performance of
the neural network representation, solution to the constrained
optimization problem, and solution to the unconstrained optimization problem \eqref{loss_disc}. In Figure \ref{fig1}, the first column gives a Monte Carlo
approximation and its error distribution. As expected,
the Monte Carlo approximation is both noisy and inaccurate. Note that the error
term of the Monte Carlo approximation has very little spatial
correlation. This motivates Assumption \ref{Assumption A1}. In the second and the third columns of Figure \ref{fig1} respectively,
we see the data-driven solvers \eqref{opt} and \eqref{loss_disc} can
clearly ``smooth out'' the fluctuation in the Monte Carlo approximation. This confirms the convergence
result proved
in \cite{dobson2019efficient} and Theorem \ref{convergence}. The
last column of Figure \ref{fig1} show that the artificial neural network method with loss
function \eqref{loss} has a similar ``smoothing'' effect. In the neural network training of this example,
we let two sets of collocation points $\mathfrak{X} $ and
$\mathfrak{Y}$ be the set of grid points to compare the result. This
result validates the use of the loss function \eqref{loss} as a
continuous version of the unconstrained optimization problem
\eqref{loss_disc}. We can see
when a grid-based approach is available, it usually has higher accuracy. However, the neural network
method is more applicable to higher dimensional problems.

\medskip

Instead of grid points, the second numerical simulation uses Algorithm 2 with randomly sample collocation
points (using Algorithm \ref{Data collocation sampling}). Figure
\ref{fig2} shows the neural network representations learnt from
various amounts of reference points
$v(\boldsymbol{y}_j)$. The discrete $L^2$ errors is computed with respect to this
grid and demonstrated on the title of each subfigures. The
neural network is then trained with Algorithm 1, in which the norm of
$\mathcal{L}u$ is evaluated at each training point. In order to
numerically check the effect of the Fokker-Planck operator in the loss
function, we train the neural network without calculating
$\mathcal{L}u$, and demonstrate the 
result in Figure \ref{fig6}. More precisely, in Figure \ref{fig6}, we
only use a large training data set $\mathfrak{Y}$. Step 3 in Algorithm
1 is skipped. See Appendix
\ref{Detail:ex1} for details.

In Figure \ref{fig2}, we can see a clear
underfitting when using too few reference points. The training result
becomes satisfactory when the number of training points is $256$ or
larger. As a comparison, if $\mathcal{L}u$ is not added to the loss
function, one needs as large as $16384$ training points to reach the
same accuracy. This shows the advantage of including $\mathcal{L}u$
into the loss function. The differential operator
$\mathcal{L}u$ helps the neural network to find a solution to the
Fokker-Planck equation. And the role of reference points is to make
sure that the Fokker-Planck solution is the one we actually need. We
can train the neural network with only a few hundreds
reference points, and the accuracy of the probability density at those
reference points does not have to be very high.  Similarly to the
discrete case in Theorem \ref{convergence}, the spatially
uncorrelated noise can be effectively removed by training the loss
function $L_{2}$. This
observation is very important in practice, as in high dimension it is
not practical to obtain the probability densities for a very large
reference set, and the result from a high dimension Monte Carlo
simulation is unlikely to be accurate.

\begin{figure}[h]
  {\includegraphics[width = \linewidth]{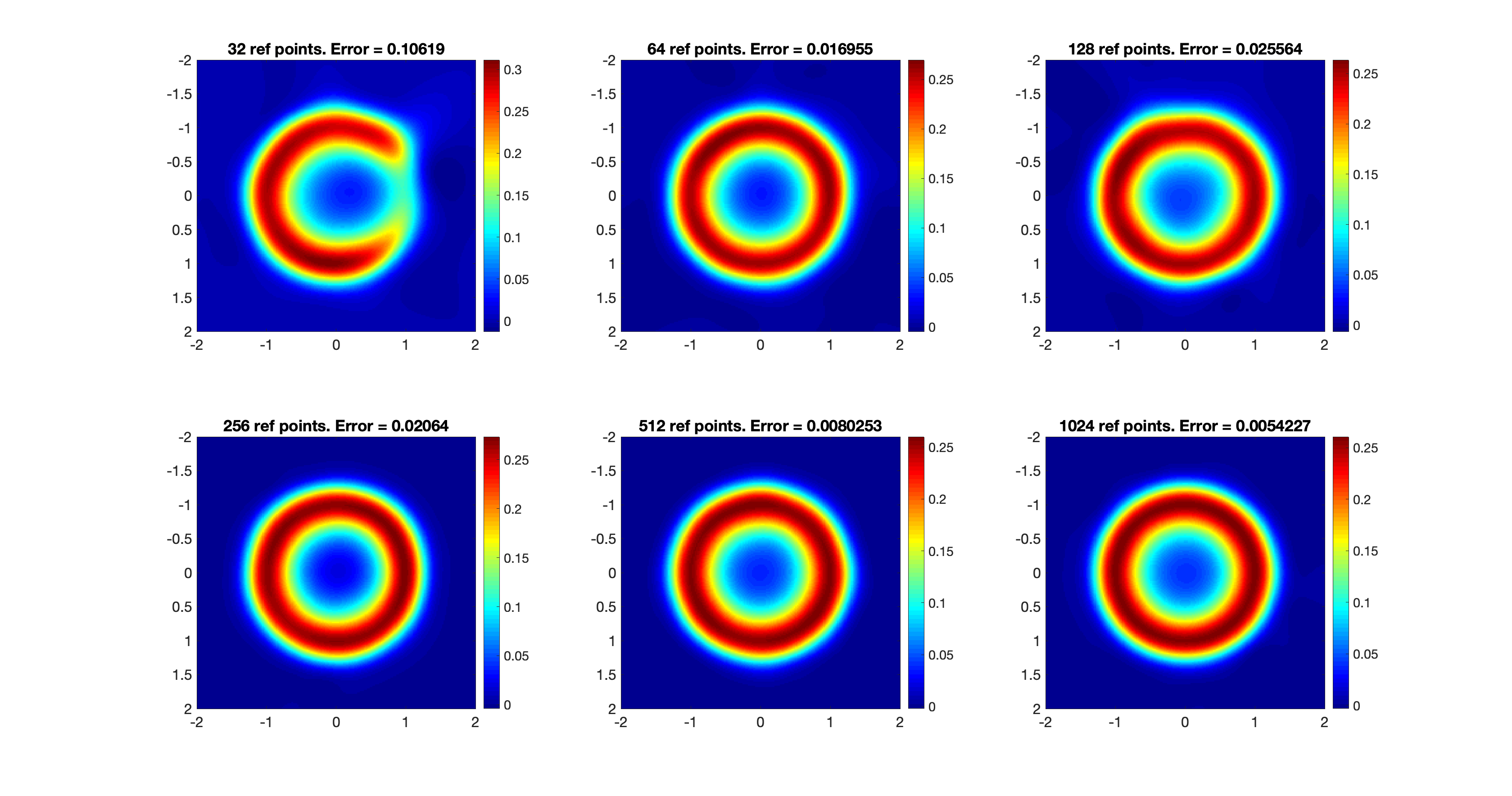}}
  \vskip -0.5cm
\caption{A comparison of different sizes of reference set with $\mathcal{L}u$ being in the loss function. Top left to bottom right: heat map of the invariant probability density function if the ``ring model'' with $32, 64, 128, 256, 512,$ and $1024$ reference points are used. The $L_{2}$ error is shown in the title of each subplot.}
\label{fig2}
\end{figure}

\begin{figure}[htbp]
  {\includegraphics[width = \linewidth]{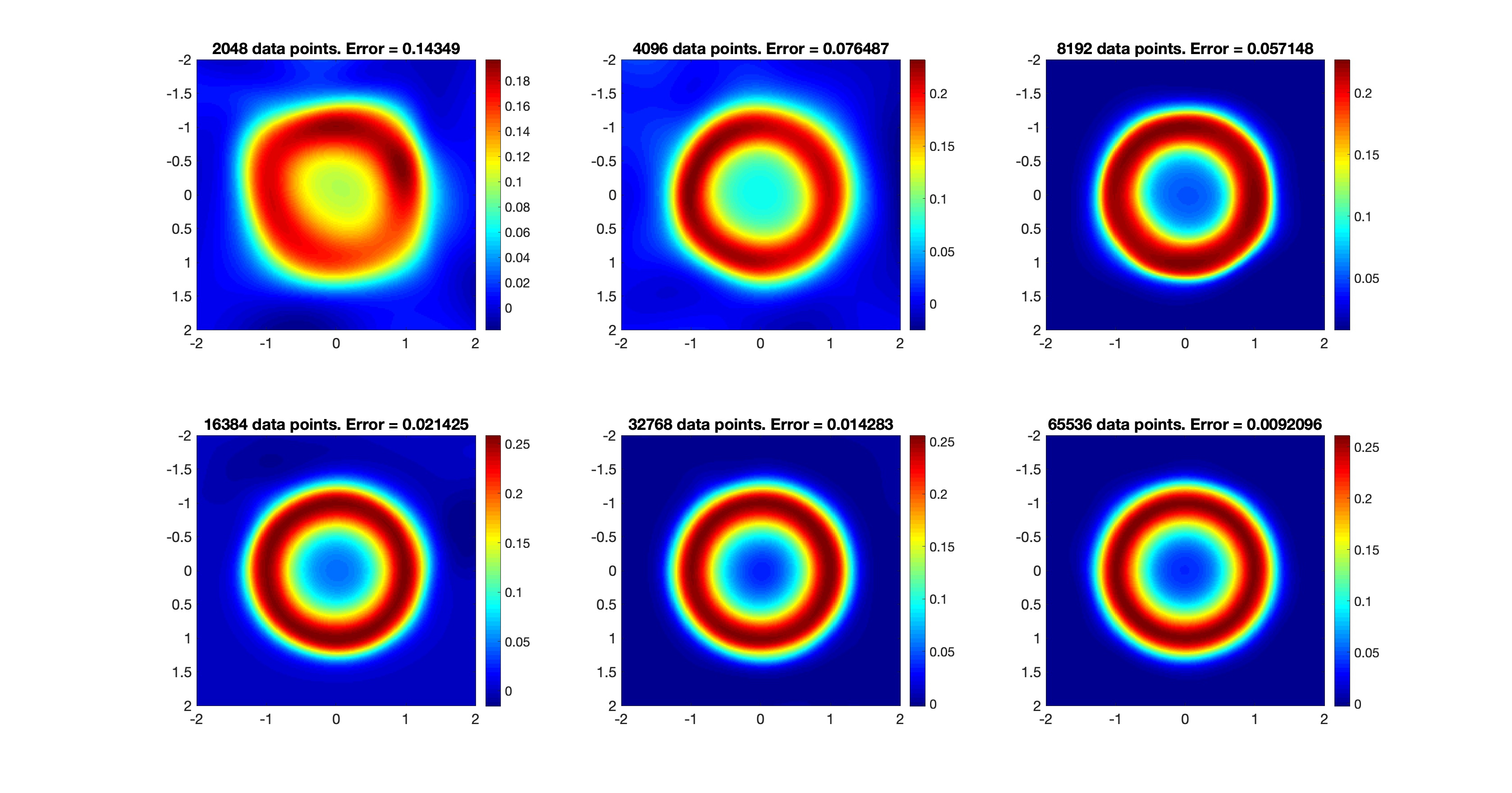}}
  \vskip -0.5cm
\caption{A comparison of different sizes of reference set without $\mathcal{L}u$ being in the loss function. Top left to bottom right: heat map of the invariant probability density function if the ``ring model'' with \num{2048}, \num{4096}, \num{8192}, \num{16384}, \num{32768}, and \num{65536} reference points are used. The $L_{2}$ error is shown in the title of each subplot.}
\label{fig6}
\end{figure}

\subsection{A 2D Gibbs measure}\label{Sec:2dGibbs}
We consider a two dimensional stochastic gradient system
\begin{equation}\label{eq:2dGibbs}
\left\{\begin{array}{l}
dX_t = (X_t^2Y_t-X_t^5)\,dt + \sigma\,dW^x_t,\\
dY_t = (\frac{1}{3}X_t^3-\frac{7}{3}Y_t)\,dt + \sigma\,dW^y_t,
\end{array}\right.
\end{equation}
where $W^x_t$ and $W^y_t$ are independent Wiener processes, and $\sigma=1$ in this example is the strength of the white noise. The drift part of equation \eqref{eq:2dGibbs} is a gradient flow of the potential function
$$V(x,y)=-\frac{1}{3}x^3y+\frac{1}{6}x^6+\frac{7}{6}y^2=\frac{1}{6}(x^3-y)^2+y^2.$$
So the invariant measure of \eqref{eq:2dGibbs} is the Gibbs measure with probability density function
$$u(x,y)=\frac{1}{Z}\exp(-2V(x,y)),$$
where
$Z=\int_{-\infty}^{\infty}\int_{-\infty}^{\infty}\exp(-2V(x,y))\,dxdy$
is the normalization parameter. We choose this system because $Y_{t}$
is conditionally linear with respect to $X_{t}$. We can use this
system to test the conditional Gaussian sampler. 

\begin{figure}[h]
  {\includegraphics[width = 1\linewidth]{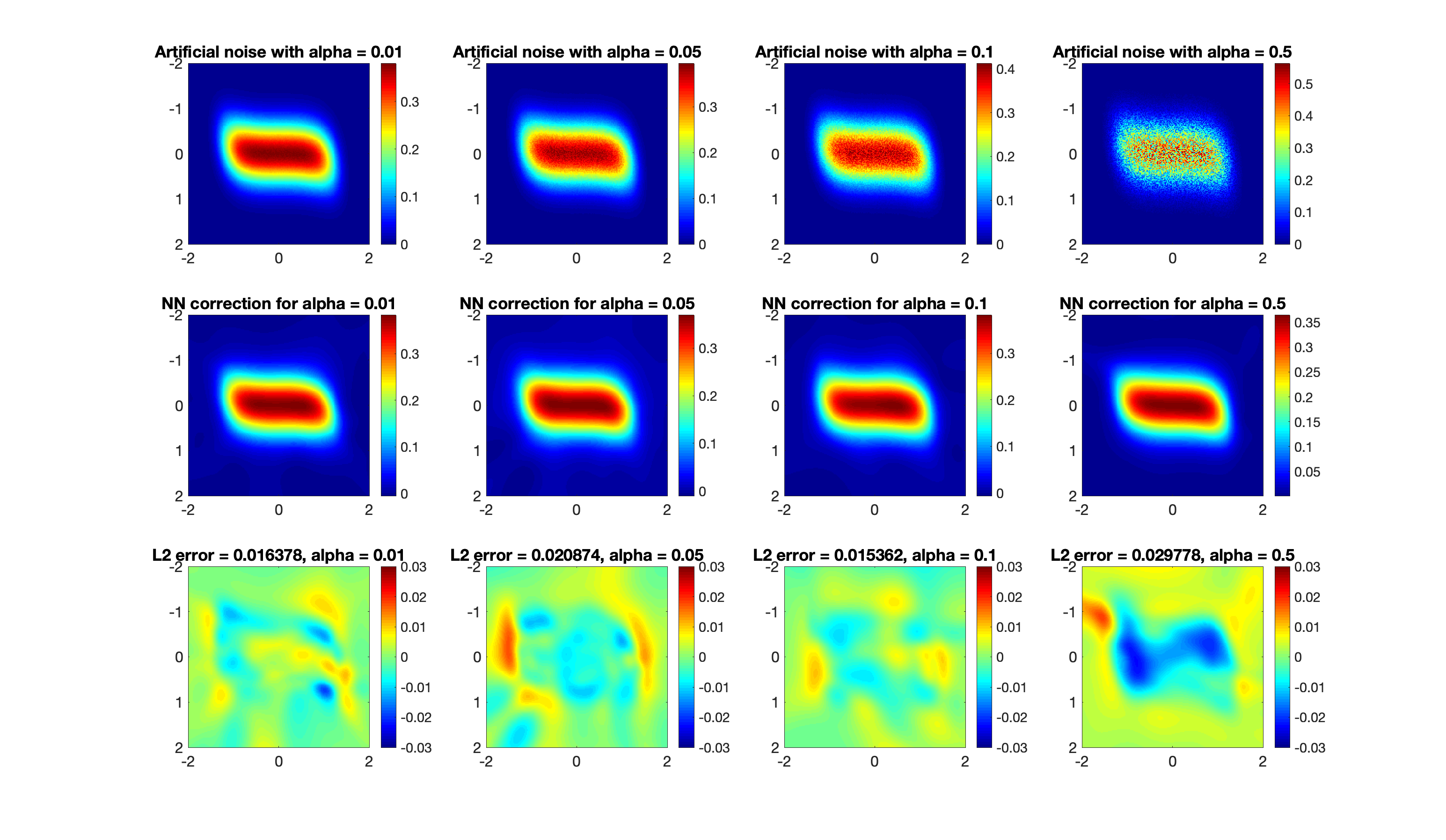}}
  \vskip -0.5cm
\caption{Neural network representations with different level of
  artificial noise. First row: Artificial noises added to the exact
  solution. Second row: Neural network approximation with
  $1024$ reference points and $10000$ training points. Third row: The error of the neural network
  approximation and the
  discrete $L_2$ error.} 
\label{fig:TD1}
\end{figure}

One aim of this numerical experiment is to show the unconstrained
optimization problem used by the artificial neural network can
tolerate spatially uncorrelated noise at a very high level. To demonstrate this, we artificially add a noise to the
exact solution $u(x)$ of the Gibbs measure to get the reference data
$v$. We first run Algorithm \ref{Data collocation sampling} to get
four sets of collocation points $\boldsymbol{y}_j,
j=1,2,\dots,1024$. Then we generate four sets of reference data $v$ at
these collocation points by injecting an artificial noise with maximal
relative error $\alpha$, where $\alpha = 0.01, 0.05, 0.1$, and $0.5$.
Then we run Algorithm \ref{Neural network training} with these sets of reference data $\{v(
{\bm y}_{j})\}_{j = 1}^{1024}$. The first row of Figure
\ref{fig:TD1} shows how the artificial noise is applied by increasing
$\alpha$ and the second row shows the neural network approximation. Observing from the third row of
Figure \ref{fig:TD1}, it is surprising that even when the magnitude of
the multiplicative noise is increased to $0.5$, namely, the relative
error of the Monte Carlo approximation is $50 \%$, the correction
$\tilde{\bm u}(\cdot, \theta)$ is still quite accurate. This shows our method has high tolerance to spatially
uncorrelated noise, which is usually the case of the reference data
obtained from Monte Carlo simulations.

\begin{figure}[h]
  {\includegraphics[width = \linewidth]{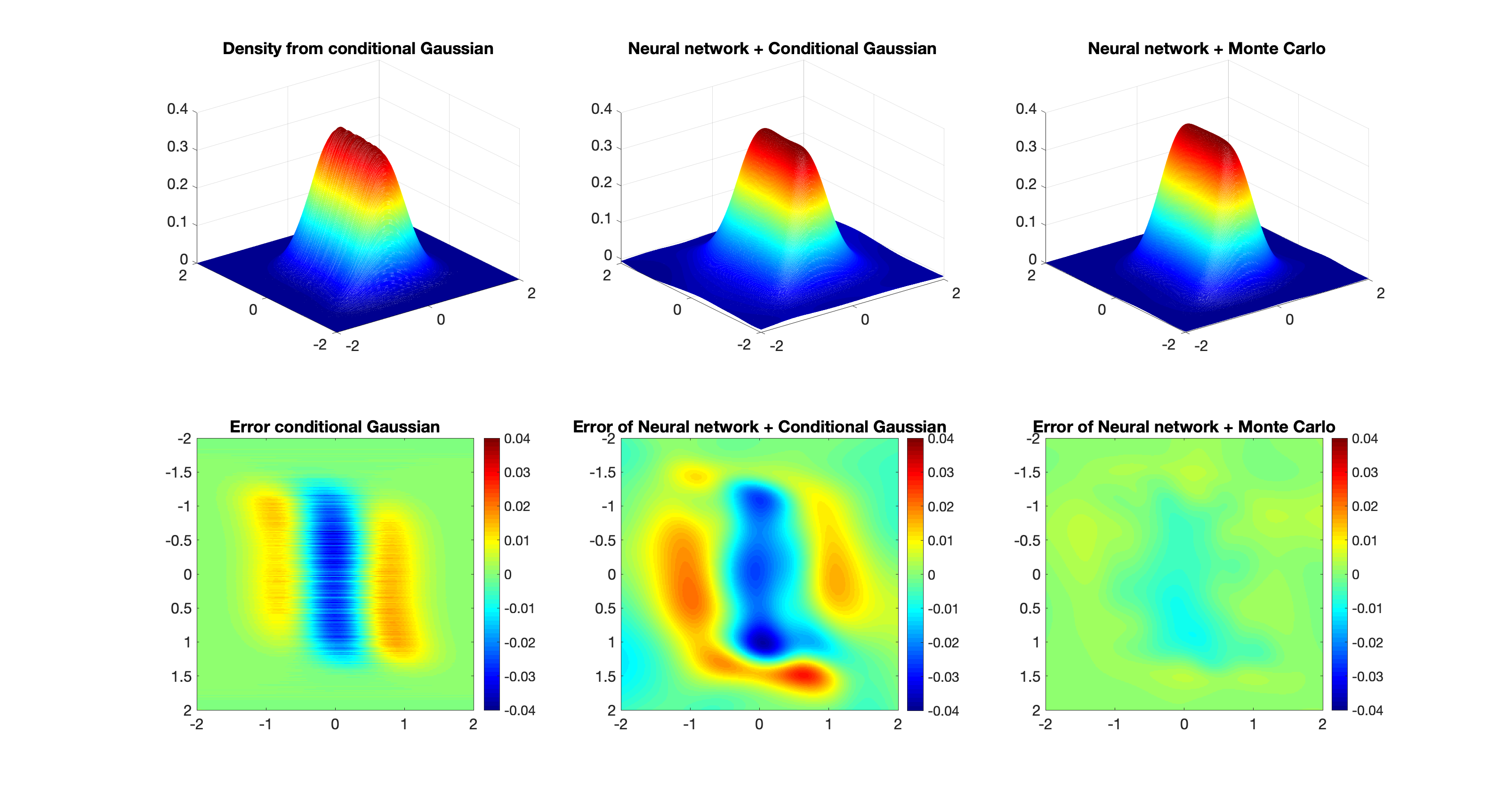}}
  \vskip -0.5cm
\caption{A comparison of the conditional Gaussian method and the
  direct Monte Carlo method. First column: The invariant probability
  density function and its error obtained by the
  conditional Gaussian method. Second column: Top: The invariant probability
  density function and its error obtained by the neural network
  approximation, with $1024$ training points whose densities are
  obtained by Algorithm \ref{Alg:CG}. Bottom: The invariant
  probability 
  density function and its error obtained by the neural network
  approximation, with $1024$ training points whose densities are
  obtained by Monte Carlo simulation. } 
\label{fig3}
\end{figure}

Then we use the conditional Gaussian sampler (Algorithm \ref{Alg:CG}
in Appendix \ref{Sec:CG}) to generate the probability density function. In Figure
\ref{fig3}, we can see there is a small but systematic bias in the
probability density function given by Algorithm \ref{Alg:CG}. We
suspect that this bias comes from the use of one long trajectory in
Algorithm \ref{Alg:CG}. As a result, if we
use it to generate reference data 
$v(\boldsymbol{y}_j)$ for ${\bm y}_{j} \in \mathfrak{Y}$, the error
will be systematic, which is very different from the spatially
uncorrelated noise seen in the Monte Carlo result. This systematic bias makes
the differential operator $\mathcal{L}u$ in the loss function hard to
guide the training, because there are infinitely many functions that
solve $\mathcal{L}u = 0$. To maintain a minimization of the two parts of the
loss function \eqref{loss}, a balance between them forces the neural
network approximation to produce an approximation biased from the exact
solution. In other words, $\boldsymbol{e}=\boldsymbol{v}-\boldsymbol{u}^{*}$ as defined in
Section \ref{Sec:Analysis} for this conditional Gaussian approximation
does not satisfy Assumption \ref{Assumption A1}. Consequently, the
convergence of $\mathbb{E}[\boldsymbol{z}] = \mathbb{E}[ \bar{\bm u} - {\bm
  u}^{*}]$ is not guaranteed. However, the conditional Gaussian sampler
has its advantage in higher dimensions. See Section \ref{Sec:6D} for
more discussion.

\subsection{A 4D ring density}
Consider a generalization of the stochastic gradient system in Subsection \ref{Sec:2dring} in four dimensional state space
\begin{equation}\label{eq:4dring}
\left\{\begin{array}{l}
dX_t = (-4X_t(X_t^2+Y_t^2+Z_t^2+U_t^2-1)+Y_t)\,dt + \sigma\,dW^x_t,\\
dY_t = (-4Y_t(X_t^2+Y_t^2+Z_t^2+U_t^2-1)-X_t)\,dt + \sigma\,dW^y_t,\\
dZ_t = (-4Z_t(X_t^2+Y_t^2+Z_t^2+U_t^2-1))\,dt + \sigma\,dW^z_t,\\
dU_t = (-4U_t(X_t^2+Y_t^2+Z_t^2+U_t^2-1))\,dt + \sigma\,dW^u_t,
\end{array}\right.
\end{equation}
where $W^x_t, W^y_t, W^z_t$ and $W^u_t$ are independent Wiener
processes, and $\sigma=1$ in this example is the strength of the white
noise. The drift part of equation \eqref{eq:4dring} is a gradient flow
of the potential function 
$$V(x,y) = (x^2+y^2+z^2+u^2-1)^2$$
plus a rotation term orthogonal to the equipotential lines of $V$ in
the first two dimensions of variables $x$ and $y$.  Hence the
invariant measure of \eqref{eq:4dring} is  
$$u(x,y,z,u)=\frac{1}{Z}\exp(-2V(x,y,z,u)),$$
where
$Z=\int_{-\infty}^{\infty}\int_{-\infty}^{\infty}\int_{-\infty}^{\infty}\int_{-\infty}^{\infty}\exp(-2V(x,y,z,u))\,dxdydzdu$
is the normalization parameter. Similar to Subsection
\ref{Sec:2dring}, the rotation term does not change the invariant probability density function,
which can be verified by substituting $u(x,y,z,u)$ into the
Fokker-Planck equation of \eqref{eq:4dring}. 

\begin{figure}[h]
  {\includegraphics[width = 1\linewidth]{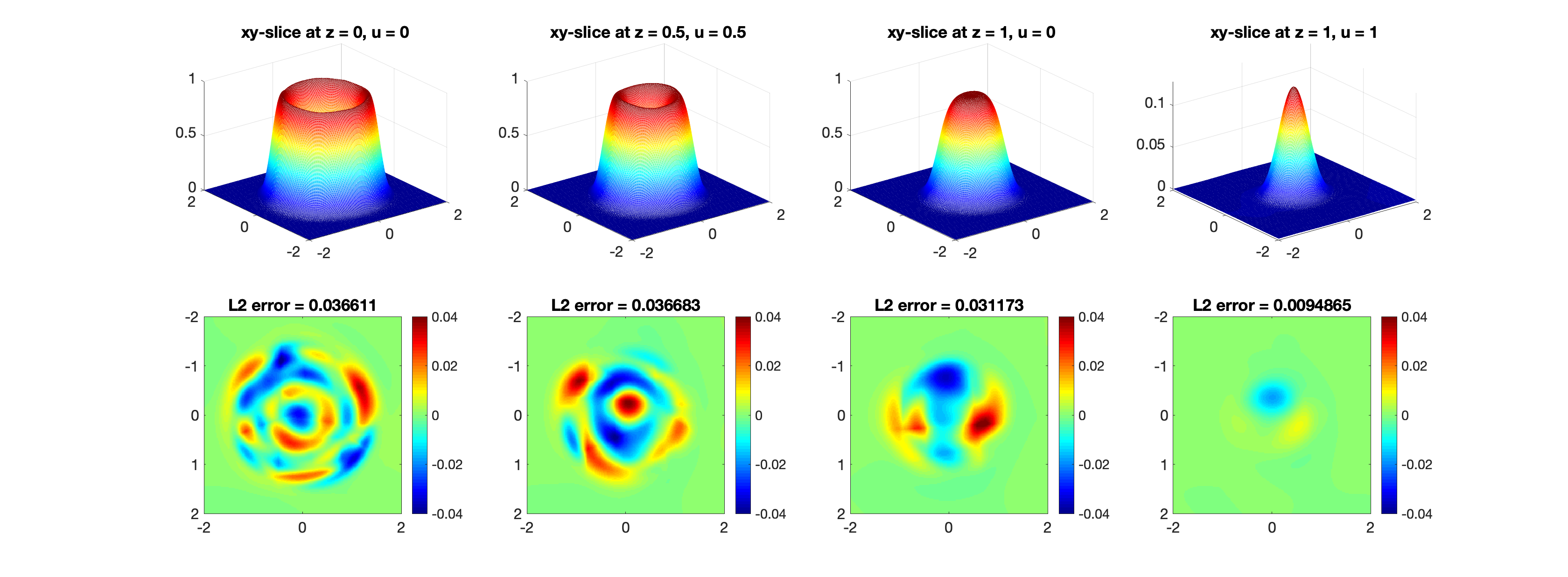}}
  \vskip -0.5cm
\caption{Invariant probability density function of the 4D ring
  (Equation \eqref{eq:4dring}). Total number of reference points is
  \num{10000}. Probability density at reference points is obtained by
  direct Monte Carlo method with $10^{10}$ sample points. First row:
  Invariant probability density functions restricted on the $x$-$y$
  slices with $z = 0, u = 0$. $z = 0.5, u = 0.5$, $z = 1, u = 0$, and $z = 1, u = 1$. Second row: Error of probability density functions when comparing with the exact solution. The discrete $L_{2}$ error is shown in the title of each subplot.}
\label{fig4}
\end{figure}

The aim of this example is to demonstrate the accuracy of neural
network representation in 4D. The numerical domain is
$D=[-2,2]^{2}$. We use Algorithm \ref{Data collocation
  sampling} to sample $10^{4}$ reference points and $10^{5}$ training
points. Probability densities at training points are obtained by
Algorithm \ref{Alg:MCforHD}. After we get the neural network approximation by Algorithm \ref{Neural
  network training}, we evaluate it on the four $x$-$y$ slices for $(z,u)=(0,0), (0.5,0.5), (1,0)$ and $(1,1)$
in Figure \ref{fig4}. Figure \ref{fig4}
also shows the error distributions and the $L^2$ error on these
slices. The errors at all points in
these 4 slices are controlled at a very low level $\leq 0.04$. And the
discrete $L^2$ errors are satisfactory. Figure  \ref{fig4}
illustrates that after training the loss function \eqref{loss} on a
sparse set of reference data, this solution function 
is accurate at any point in $D$. It demonstrates strong
representing power of the neural network approximation, both globally and
locally. We remark that it is not possible to solve this 4D
Fokker-Planck equation with traditional numerical PDE approach. The
divide-and-conquer strategy in \cite{dobson2019efficient} would be
difficult to implement as well, due to the high memory requirement of
a 4D mesh.

\subsection{A 6D conceptual dynamical model for turbulence}\label{Sec:6D}
In this subsection, we consider a six dimensional stochastic dynamical system with conditional Gaussian structure as \eqref{eq:CG1}-\eqref{eq:CG2} with $\boldsymbol{X}^l_{\mathbf{I}}(t)=X_t$ and $\boldsymbol{X}^l_{\mathbf{II}}(t)=(Y^{(1)}_t, Y^{(2)}_t, Y^{(3)}_t, Y^{(4)}_t, Y^{(5)}_t)^T$
\begin{equation}\label{eq:6dCG}
\left\{\begin{array}{l}
dX_t = (-0.1X_t+0.5+0.25X_t(Y^{(1)}_t+Y^{(2)}_t+Y^{(3)}_t+Y^{(4)}_t+Y^{(5)}_t))\,dt + 2\,dW^x_t,\\
dY^{(1)}_t = (-0.2Y^{(1)}_t-0.25X_t^2)\,dt + 0.5\,dW^{(1)}_t,\\
dY^{(2)}_t = (-0.5Y^{(2)}_t-0.25X_t^2)\,dt + 0.2\,dW^{(2)}_t,\\
dY^{(3)}_t = (-Y^{(3)}_t-0.25X_t^2)\,dt + 0.1\,dW^{(3)}_t,\\
dY^{(4)}_t = (-2Y^{(4)}_t-0.25X_t^2)\,dt + 0.1\,dW^{(4)}_t,\\
dY^{(5)}_t = (-5Y^{(5)}_t-0.25X_t^2)\,dt + 0.1\,dW^{(5)}_t,
\end{array}\right.
\end{equation}
where $W^x_t$ and $W^{(i)}_t, i = 1, 2, \dots, 5$, are independent
Wiener processes. This model has been studied in
\cite{chen2018efficient} as a numerical example. 

\begin{figure}[h]
  {\includegraphics[width = 1\linewidth]{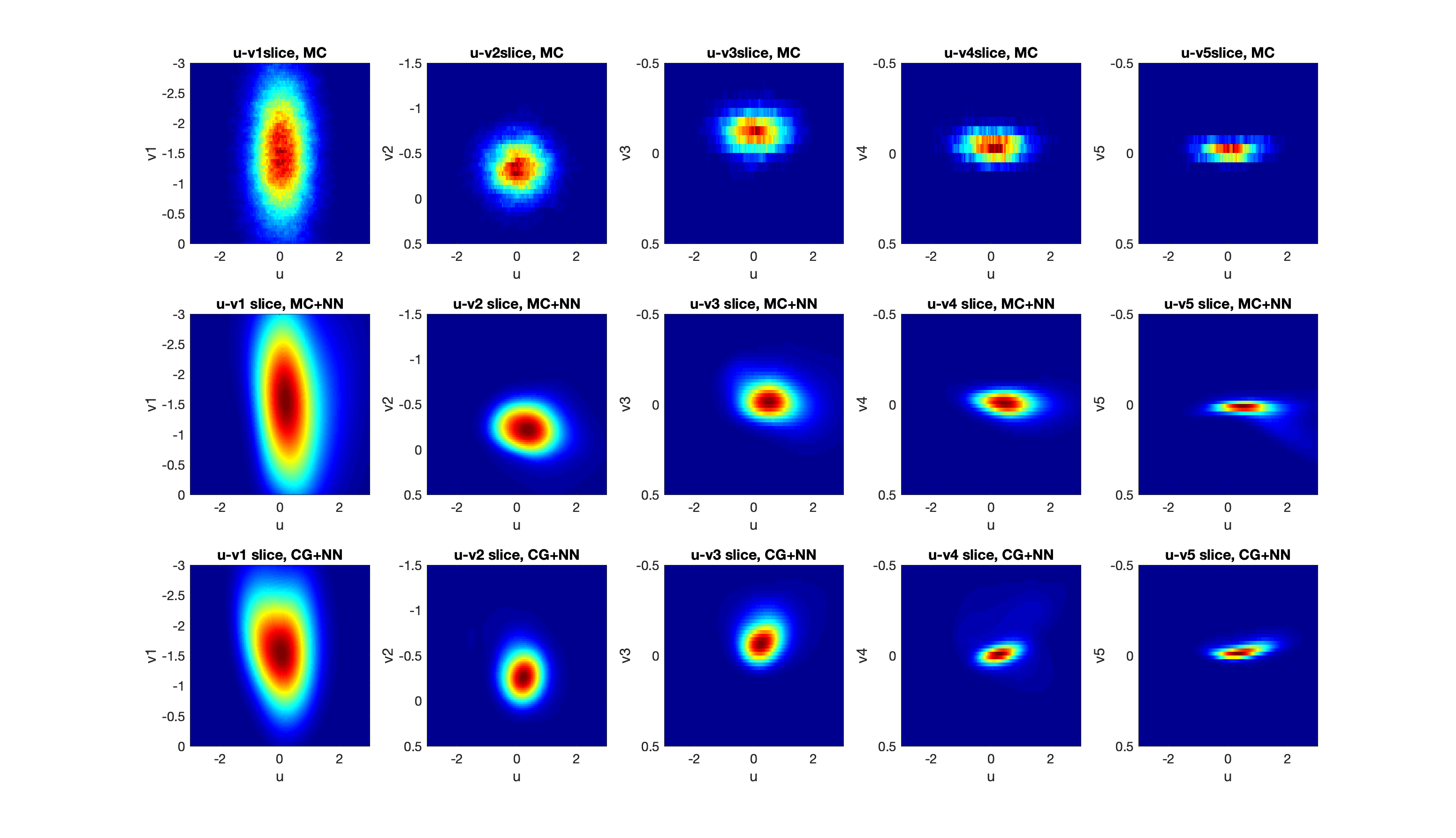}}
  \vskip -0.5cm
\caption{Heat maps of the invariant probability density function of the 6D turbulence model restricted on 2D slices. From left to right: the $i$th plot is the $u$-$v_i, i=1,2,\dots,5$ slice with $v_j=0,j\neq i$.
Top row are invariant probability density functions obtained by the direct
Monte Carlo method. Middle row and bottom row are the
neural network approximation using  Algorithm \ref{Neural network
  training} with probability densities $\{ v({\bm y}_{i})\}$ obtained by
Monte Carlo simulation and the conditional Gaussian sampler, respectively. 
}
\label{fig5}
\end{figure}

In this high dimensional system, we compare the direct Monte Carlo approximation and neural network
approximation with the reference data obtained from both Monte Carlo and the modified
conditional Gaussian sampler in Algorithm \ref{Alg:CG}. It is not
possible to visualize a 6D probability density function, so we compare
probability densities on the central
slices in the 6D state space, namely, the $u$-$v_i, i=1,2,\dots,5$
hyperplanes with $v_j=0,j\neq i$. The first row demonstrates the
probability density functions at the five slices obtained by a direct
Monte Carlo simulation. The solution has low resolution and low
accuracy because It is
difficult to collect enough samples in high dimension. 

 In this example, we
generate a reference point set $\mathfrak{Y}$ with size $N^{Y} =
20000$ using Algorithm \ref{Data
  collocation sampling}. These
collocations are very sparse in this six dimensional region $D$. Then
we use both Monte Carlo approximation and the conditional Gaussian
sampler in Algorithm \ref{Alg:CG} to generate the probability density
$v( {\bm y}_{i})$ for ${\bm y}_{i} \in \mathfrak{Y}$. Note that the simulation time
of Monte Carlo sampler is about $100$ times more than the conditional
Gaussian sampler. Then in both
cases, we use Algorithm \ref{Neural network training} to obtain 
a neural network approximation of the invariant probability measure. After
the neural network is trained in the whole region $D$, we evaluate and
plot it on the five central slices (see the second and third row in Figure
\ref{fig5}). Although a closed-form solution for this example is not
possible, we can still see that the solution obtained by three
different approaches are not very far away from each other. This
confirms the validity of the solutions. The neural
network has low demand ($20000$ points) on reference data points and
fast training speed (less than one hour). After the training, we
can use it to predict the invariant probability density at
any point in the domain. This is a remarkable result, because it is
impossible to solve such as six dimensional problem by using traditional approaches.   

\section{Conclusion and Prospective Works}
We proposed a neural network approximation method for solving the
Fokker-Planck equations. The motivation is that the data-driven method
studied in \cite{li2018data, dobson2019efficient} can be converted to
a similar unconstrained optimization problem, and a mesh-free neural
network solver can be used to solve the ``continuous version'' of this unconstrained optimization
problem. We only present the case of the stationary Fokker-Planck
equation that describes the invariant probability measure, because the
case of the time-dependent Fokker-Planck equation is analogous. By introducing the differential operator of the
Fokker-Planck equation into the loss function, the demand for large training data in
the learning process is significantly reduced. Our simulation shows
that the neural network can tolerate very high noise in the 
training data so long as it is spatially uncorrelated. We believe
this work provides an effective numerical approach to study many high dimensional
stochastic dynamics. 

In this paper, the convergence of minimizer of the new unconstrained optimization
problem is only carried out for the discrete case. It is tempting to
extend this result to the space of functions. The problem becomes
trivial and not interesting if we work with the space of $C^{\infty}$
functions and assume that the error term of the Monte Carlo simulation
is a spatial white noise. So we need to consider the more realistic
case such as the Barron space, and find a more realistic assumption to
describe the reference data obtained by the Monte Carlo
simulation. We plan to carry out this study in our subsequent work. In the future, we will also apply this method to more complicated but interesting
systems such as systems with non-Gaussian L\'{e}vy noises and
quasi-stationary distributions.

\bibliographystyle{amsplain}
\bibliography{myref}

\appendix

\section{Proof of Theorem \ref{convergence}}
\label{LAproof}

We first need to describe how the ``baseline'' solution ${\bm u^{*}}$
is obtained. Let $\hat{\bm u} \in \mathbb{R}^{N^{n}}$ denote 
the true solution of the Fokker-Planck equation restricted on the
rectangular grid $\{ x_{i}\}_{i = 1}^{N^{n}}$ in $D$, that is,
$\hat{\bm u}_{i}$ is the solution to equation \eqref{FPE} at point
$x_{i}$. Then ${\bm u}^{*}$ is the numerical solution obtained by the
finite difference method such that ${\bm A} {\bm u}^{*} = 0$ and ${\bm
  u}^{*} = \hat{\bm u}$ at all grid points on $\partial D$. More
precisely, we need to solve a new linear system
$$
\begin{bmatrix}
{\bm A}\\P_{0}
\end{bmatrix}
{\bm u}^{*} = 
\begin{bmatrix}
{\bm 0}\\P_{0} \hat{\bm u}
\end{bmatrix} \,,
$$
where ${\bm A}$ is the aforementioned $(N-2)^{n} \times N^{n}$ matrix, $P_{0}$ is an $( N^{n} - (N-2)^{n})
\times N^{n}$ matrix such that $P_{0} {\bm u}$ gives entries of ${\bm
  u}$ on grid points on $\partial D$. Since the finite difference
method is convergent for second order elliptic PDEs with given boundary
value, when $h \ll 1$, ${\bm u}^{*}$ is a good approximation of
$\hat{\bm u}$. And the accuracy of ${\bm u}^{*}$ is considerable
higher than the result of Monte Carlo simulations. 

\medskip

\begin{proof}[Proof of Theorem \ref{convergence}]
  Let $F( {\bm u}) = {\bm u}^{T} {\bm A}^{T} {\bm A} {\bm u} + (\bm u - \bm v)^T (\bm u - \bm v)$. It is
  easy to see that the minimizer solves
$$
  \frac{\partial F( {\bm u}) }{\partial {\bm u}} = 2 {\bm A}^{T} {\bm
    A} {\bm u} + 2 {\bm u} - 2 {\bm v} = {\bm 0}\,. 
$$
Therefore, the quadratic form \eqref{loss_disc} has a unique minimizer
$\bar{\boldsymbol{u}} =
(\boldsymbol{I}+\boldsymbol{A}^{T}\boldsymbol{A})^{-1}\boldsymbol{v}$.

Denote $\boldsymbol{B} = (\boldsymbol{I}+\boldsymbol{A}^{T}\boldsymbol{A})^{-1}$. Then
$$\boldsymbol{B}^{-1}\boldsymbol{u}^{*} = (\boldsymbol{I}+\boldsymbol{A}^{T}\boldsymbol{A})\boldsymbol{u}^{*} = \boldsymbol{u}^{*},$$
and
$$\boldsymbol{v} = \boldsymbol{B}^{-1}\bar{\boldsymbol{u}} = \boldsymbol{B}^{-1}\bar{\boldsymbol{u}} - \boldsymbol{B}^{-1}\boldsymbol{u}^{*} + \boldsymbol{u}^{*} = \boldsymbol{B}^{-1}\boldsymbol{z} + \boldsymbol{u}^{*}.$$
So $\boldsymbol{z} = \boldsymbol{B}\boldsymbol{e}$. Therefore, $\mathbb{E}[\boldsymbol{z}]=0$ by assumption \ref{Assumption A1}, and $\text{cov}(\boldsymbol{z}, \boldsymbol{z}) = \zeta^2 \boldsymbol{B}\boldsymbol{B}^{T} = \zeta^2 \boldsymbol{B}^2$ by the symmetry of $\boldsymbol{B}$. Furthermore, 
$$\mathbb{E}[\|\boldsymbol{z}\|^2_2] = \text{Trace}(\text{cov}(\boldsymbol{z}, \boldsymbol{z})) = \zeta^2\text{Trace}(\boldsymbol{B}^2) = \zeta^2\sum_{k=1}^{M}\lambda_k^2,$$
where $M=N^n$ and $\{\lambda_k\}_{k=1}^{M}$ are the
eigenvalues of $\boldsymbol{B}$.

For the sake of simplicity let $M = N^{n}$. Recall that ${\bm A}_{h} = h^{2}{\bm A}$. Since
$\boldsymbol{A}_h^T\boldsymbol{A}_h$ is symmetric, there is a
orthonormal matrix $\boldsymbol{Q}$ such that
$\boldsymbol{A}_h^T\boldsymbol{A}_h=\boldsymbol{Q}\boldsymbol{\Lambda}_h\boldsymbol{Q}^T$,
where
$\boldsymbol{\Lambda}_h=\text{diag}\{\lambda^{h}_1,\cdots\lambda^{h}_M\}$
is the diagonal matrix whose diagonal elements are the eigenvalues of
$\boldsymbol{A}_h^T\boldsymbol{A}_h$.  A short calculation shows
$$
\lambda_k=\frac{1}{1+h^{-4} \lambda^{h}_k} \,.
$$

Since $\boldsymbol{A}_h^T\boldsymbol{A}_h$ is positive semi-definite
and $\boldsymbol{A}_h\in \mathbb{R}^{(N-2)^n\times N^n}$ has full rank, 
$\boldsymbol{A}_h^T\boldsymbol{A}_h$ has $N^n - (N-2)^n$
zero eigenvalues, while $r = (N-2)^{n}$ eigenvalues are positive. So for all
sufficiently small $h > 0$, we have
\begin{align*}
\mathbb{E}[\|\boldsymbol{z}\|^2_2] & = \zeta^2\sum_{k=1}^{M}\lambda_k^2 = \zeta^2\sum_{k=1}^{M} \left(\frac{1}{1+h^{-4} \lambda^{h}_k}\right)^2\\
& = \zeta^2 \left[N^n - (N-2)^n + \sum_{i = 1}^{r}\left(\frac{1}{1+h^{-4} \lambda^{h}_i}\right)^2 \right] \,.
\end{align*}
Since $\mathbb{E}[ \| {\bm e} \|^{2}] = \zeta^{2} N^{n}$ and $N
= O(h^{-1})$, by Assumption \ref{Assumption A2}, we
have
$$
  \frac{\mathbb{E}[\|\boldsymbol{z}\|^2_2]}{ \mathbb{E}[ \| {\bm e}
   \|^{2}]} \leq \frac{N^n - (N-2)^n}{N^n} + O(1) Q(h) \rightarrow 0
$$
as $h \rightarrow 0$.
\end{proof}

It remains to discuss why Assumption \ref{Assumption
      A2}  is a valid assumption. Because ${\bm A}$ is obtained by finite difference method, it
    has the form
    $\boldsymbol{A}=h^{-2}(\boldsymbol{A_0}(h)+h\boldsymbol{A_1}(h))$,
    where ${\bm A_{0}}$ (resp. ${\bm A_{1}}$) is a constant matrix if
    $\sigma$ (resp. $f$) is constant. It is extremely difficult to
    rigorously prove Assumption \ref{Assumption
      A2} when $\sigma$ and $f$ in equation \eqref{SDE} are
    location-dependent. In general, the smallest
    nonzero eigenvalue is $O(h^{4})$, but other eigenvalues are
    significantly larger than $O(h^{4})$. There are $O(h^{-n})$ terms
    in the summation in the definition of $Q(h)$. Hence $Q(h)$
    approaches to zero when $h \ll 1$. In Figure \ref{fig8}, we
    numerically verify Assumption \ref{Assumption
      A2}  for 1D and 2D Fokker-Planck equations. We can see that $Q(h) \rightarrow 0$ as $h
    \rightarrow 0$ in both cases. This numerical result is for $\sigma =
    \mathrm{Id}_{n}$ and $f = 0$. Note that $f$ determines ${\bm
      A_{1}}(h)$, which is only a small perturbation of the matrix
    ${\bm A}(h)$. So we expect Assumption \ref{Assumption
      A2}  to hold for any bounded $f$.

\begin{figure}[htbp]
{\includegraphics[width = 0.9\linewidth]{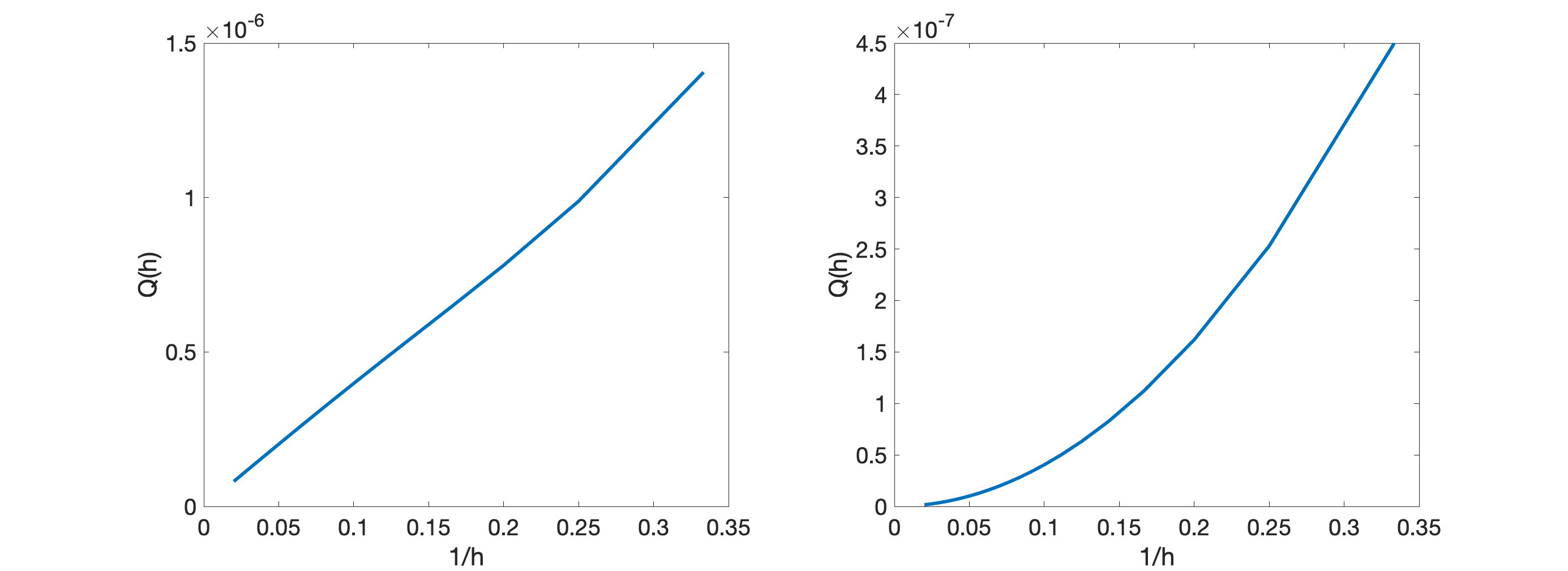}}
\caption{Left: $Q(h)$ vs. $h$ for the discretized 1D
  Fokker-Planck operator. Right: $Q(h)$ vs. $h$ for the discretized 2D
  Fokker-Planck operator.  }
\label{fig8} 
\end{figure}

\section{Sampling probability densities --
  direct Monte Carlo.}\label{Sec:MCforHD}
It remains to discuss the sampling technique to obtain ${\bm
  v(y_{i})}$ for ${\bm y_{i}} \in \mathfrak{Y}$. This step is trivial
when using traditional grid-based method. One only needs to set up a
grid in the numerical domain and count sample points in the
neighborhood of each grid from
a long trajectory. However, instead of the full space, Algorithm
\ref{Neural network training} only requires probability densities at
thousands of reference points ${\bm y_{i}} \in \mathfrak{Y}$. Also a
high dimensional grid could occupy an unrealistic amount of memory. So
we need to improve the efficiency of the Monte Carlo sampler.

As mentioned in Section \ref{Sec: FPE solver}, to obtain a desirable accuracy
of the reference data $v(\boldsymbol{y}_j)$ using a Monte Carlo method,
one needs to run very long numerical trajectories of \eqref{SDE} to
guarantee enough points on the trajectories are counted around
$\boldsymbol{y}_j$. On the other hand, when the dimensionality
increases, the size of reference set $\mathfrak{Y}$ in the training process also
increases. For example, to solve a 6D Fokker-Planck equation, $N^{Y}$
needs to be as large as tens of thousands. If we do not optimize the
algorithm, then every time a new sample point is obtained from a long
trajectory, one must check whether it belongs to the 
neighborhood of each collocation points
$\boldsymbol{y}_j\in\mathfrak{Y}$. This will make the long trajectory sampler too
slow to be useful. 

An alternative approach is to create an $N$ mesh with $N^{n}$ grid
points on $D=\prod_{\iota=1}^{n}[a_\iota,b_\iota]$ and denote the vector of grid
points by $\boldsymbol{y}^{k}, k=1,2,\dots,N^n$. Let $h_{\iota} =
(b_{iota} - a_{iota})/h$ be the grid size. This gives an
$n$-dimensional ``box'' $\prod_{\iota = 1}^{n}[{\bm y}^{k}_{\iota} -
h_{\iota}, {\bm y}^{k}_{\iota} + h_{\iota}]$ around each mesh point
${\bm y}^{k}$. Instead of using Algorithm \ref{Data collocation 
  sampling} to sample $\boldsymbol{y}_j$ directly, we choose the closest mesh point $\boldsymbol{y}^{k_j}$ for
each $\boldsymbol{y}_j$ given by Algorithm \ref{Data collocation
  sampling}. So we have $\mathfrak{Y} = \{ \boldsymbol{y}^{k_j}\}, j =
1, \cdots, N^{Y}$. With the help of the mesh, we can put a sample point
into the corresponding $n$-dimensional ``box'' after implementing $2n$ comparisons. After
running a sufficiently long trajectory, the number of samples in each
box gives the approximate probability density at each grid point. Then
we can look up the probability density of $\boldsymbol{y}^{k_j}$ from
the corresponding boxes. This approach dramatically improves the efficiency, at
the cost of storing a big array with $N^{n}$ points. When $n \geq 4$,
this method could have an unrealistically high demand of the memory.

We propose the following ``splitting'' method to balance the
efficiency and the memory pressure. The idea is to split the dimensions in
to groups to reduce the size of vector stored in the memory. To be specific, we use $n=6$ as
an example to state this method. One can easily generalize it to other
dimensions. For $\mathbb{R}^{6} = \mathbb{R}^{3} \times \mathbb{R}^{3}$, we create an array of
arrays $\mathcal{Q}$ with $2\times N^3$
entries.  The first and second $N^{3}$ entries are for the first and
second $\mathbb{R}^{3}$, respectively. Each entry of $\mathcal{Q}$ is an array of indices of
training points. More precisely, for a collocation point $\boldsymbol{y}_j =
(y^j_1,y^j_2,\dots,y^j_6) \in \mathfrak{Y}$ that is also a mesh point,
we denote its index by
$(n_1,n_2,\dots,n_{6})$, where $n_\iota=
(y^j_\iota-a_\iota)/h_{j} , \iota=1,2,\dots,6$. Then we record the
numbering $j$ in two arrays corresponding to the $(n_1 N^2 + n_2 N + n_3)$-th and the $(N^3 + n_4
N^2 + n_5 N + n_6)$-th entries of $\mathcal{Q}$. When a
sample point $\boldsymbol{x}=(x_1,x_2,\dots,x_6)$ is obtained from the
Monte Carlo sampler, we compute its mesh index
$(n^{\boldsymbol{x}}_1,n^{\boldsymbol{x}}_2,\dots,n^{\boldsymbol{x}}_6)$,
where $n^{\boldsymbol{x}}_\iota=\lfloor (x_\iota-a_\iota)/h_{\iota} + 1/2\rfloor,
\iota=1,2,\dots,6$. Then we check the arrays at the $(n^{\boldsymbol{x}}_1 N^2 +
n^{\boldsymbol{x}}_2 N + n^{\boldsymbol{x}}_3)$-th and the $(N^3 +
n^{\boldsymbol{x}}_4 N^2 + n^{\boldsymbol{x}}_5 N +
n^{\boldsymbol{x}}_6)$-th entries of $\mathcal{A}$. The sample point
${\bm x}$ is associated to the training point ${\bm y}_{j}$ if and only if the
intersection of the two aforementioned arrays is $j$. See
Algorithm \ref{Alg:MCforHD} for the full detail.

\begin{algorithm}[h]
\caption{Reference data sampling with Monte Carlo method for high dimensional spaces $\mathbb{R}^6$}
\label{Alg:MCforHD}
\begin{algorithmic}[1]
\Require
Reference set $\mathfrak{Y} = \{ {\bm y}_{1}, \cdots, {\bm
  y}_{N^{Y}}\}$. ${\bm y}_{i}$ are grid points. 
\Ensure
 Probability densities  $v(\boldsymbol{y}_j)$ at $\boldsymbol{y}_j, j = 1, 2, \dots, N^{Y}$.
\State Set a zero array $\boldsymbol{\eta}$ with length $N^{Y}$ and an
array of arrays $\mathcal{Q}$ that contains $2\times N^3$ empty arrays.
\State Sample $N^{Y}$ collocation points using Algorithm \ref{Data
  collocation sampling} 
\For {$j = 1$ to $N^{Y}$}
	\State Compute $n_\iota=(y^j_\iota-a_\iota)/h_{\iota}, \iota=1,2,\dots,6$.
	\State Add $j$ to the $(n_1 N^2 + n_2 N + n_3)$-th and
        the $(N^3 + n_4 N^2 + n_5 N + n_6)$-th elemental arrays of
        $\mathcal{Q}$. 
\EndFor
\State Initialize $\boldsymbol{X}(0)$ and run a numerical simulation
of equation \eqref{SDE}) for sometime $t_0$ and ``burn in'' time $t_{0}$.
\State Reset $\boldsymbol{X}(0)=\boldsymbol{X}(t_0)$
\For {$l = 1$ to $L$}
	\State Continue the numerical simulation of \eqref{SDE} with
        step size $\Delta t$ to get a new sample point
        $\boldsymbol{x}=\boldsymbol{X}(l\Delta t)$. 
	\State Compute $n^{\boldsymbol{x}}_\iota=\lfloor
        (x_\iota-a_\iota)/h_{\iota} + 1/2 \rfloor , \iota=1,2,\dots,6$.
	\State Check the intersection $\mathcal{B}_{\boldsymbol{x}}$ of the $(n^{\boldsymbol{x}}_1 N^2 + n^{\boldsymbol{x}}_2 N + n^{\boldsymbol{x}}_3)$-th and the $(N^3 + n^{\boldsymbol{x}}_4 N^2 + n^{\boldsymbol{x}}_5 N + n^{\boldsymbol{x}}_6)$-th elemental arrays of $\mathcal{Q}$.
	\If {$\mathcal{B}_{\boldsymbol{x}}=\{j\}$}
		\State $\boldsymbol{\eta}(j)=\boldsymbol{\eta}(j)+1$.
	\EndIf
\EndFor
\State Return $v(\boldsymbol{y}_j)=\boldsymbol{\eta}(j)\prod_{\iota =
  1}^{6} h_{\iota}^{-1}/L, j = 1, 2, \dots, N^{Y}$.
\end{algorithmic}
\end{algorithm}

\section{Sampling probability densities --
  conditional Gaussian sampler.}\label{Sec:CG} As discussed before,
the direct Monte Carlo method still suffers from the curse of
dimensionality. It is more and more difficult to collect enough
samples in a higher dimensional box. To maintain the desired accuracy in high dimensional
spaces, the requirement of sampling grows exponentially with
the dimension. We need to either run much longer numerical trajectories of
\eqref{SDE} or make the grid more coarse. Otherwise the simulation will give a lot
of $v( {\bm y}_{i}) = 0$ at reference points ${\bm y_{i}}$ whose
invariant  probability density is not zero. This makes it not applicable as a reference data in
the neural network training. For some high dimensional problems with conditional linear structure,
the conditional Gaussian framework introduced in
\cite{chen2017beating, chen2018efficient} can be effectively
applied to solve the problem of curse-of-dimensionality. Consider a stochastic differential equation with the
following the conditional linear structure
\begin{align}
d\boldsymbol{X}_{\mathbf{I}} & = [\boldsymbol{A}_0(t,\boldsymbol{X}_{\mathbf{I}})+\boldsymbol{A}_1(t,\boldsymbol{X}_{\mathbf{I}})\boldsymbol{X}_{\mathbf{II}}]\,dt + \boldsymbol{\Sigma}_{\mathbf{I}}(t,\boldsymbol{X}_{\mathbf{I}})\,d\boldsymbol{W}_{\mathbf{I}}(t),\label{eq:CG1}\\
d\boldsymbol{X}_{\mathbf{II}} & = [\boldsymbol{a}_0(t,\boldsymbol{X}_{\mathbf{I}})+\boldsymbol{a}_1(t,\boldsymbol{X}_{\mathbf{I}})\boldsymbol{X}_{\mathbf{II}}]\,dt + \boldsymbol{\Sigma}_{\mathbf{II}}(t,\boldsymbol{X}_{\mathbf{I}})\,d\boldsymbol{W}_{\mathbf{II}}(t),\label{eq:CG2}
\end{align}
where $\boldsymbol{X}(t) = (\boldsymbol{X}_{\mathbf{I}}(t),
\boldsymbol{X}_{\mathbf{II}}(t))\in
\mathbb{R}^{n_{\mathbf{I}}}\times\mathbb{R}^{n_{\mathbf{II}}}$ is the
solution stochastic process. Then given the current path
$\boldsymbol{X}_{\mathbf{I}}(s), s\leq t$, the conditional
distribution of $\boldsymbol{X}_{\mathbf{II}}(t)$ is approximated by a
Gaussian distribution
$$(\boldsymbol{X}_{\mathbf{II}}(t)|\boldsymbol{X}_{\mathbf{I}}(s), s\leq t)\sim N(\olsi{\boldsymbol{X}}_{\mathbf{II}}(t), \boldsymbol{R}_{\mathbf{II}}(t)),$$
where the expectation $\olsi{\boldsymbol{X}}_{\mathbf{II}}(t)$ and
variance $\boldsymbol{R}_{\mathbf{II}}(t)$ follow the ordinary
differential equations 
\begin{align}
d\olsi{\boldsymbol{X}}_{\mathbf{II}} & = [\boldsymbol{a}_0+\boldsymbol{a}_1\olsi{\boldsymbol{X}}_{\mathbf{II}}]\,dt + (\boldsymbol{R}_{\mathbf{II}} \boldsymbol{A}_1^{*}(\boldsymbol{\Sigma}_{\mathbf{I}}\boldsymbol{\Sigma}_{\mathbf{I}}^{*})^{-1}[d\boldsymbol{X}_{\mathbf{I}}-(\boldsymbol{A}_0+\boldsymbol{A}_1\olsi{\boldsymbol{X}}_{\mathbf{II}})\,dt],\label{eq:CGexp}\\
d\boldsymbol{R}_{\mathbf{II}} & = [\boldsymbol{a}_1\boldsymbol{R}_{\mathbf{II}}+\boldsymbol{R}_{\mathbf{II}}\boldsymbol{a}_1^{*} + (\boldsymbol{\Sigma}_{\mathbf{I}}\boldsymbol{\Sigma}_{\mathbf{I}}^{*}) - (\boldsymbol{R}_{\mathbf{II}}\boldsymbol{A}_1^{*}(\boldsymbol{\Sigma}_{\mathbf{I}}\boldsymbol{\Sigma}_{\mathbf{I}}^{*})^{-1}(\boldsymbol{R}_{\mathbf{II}}\boldsymbol{A}_1^{*})^{*}]\,dt.\label{eq:CGcov}
\end{align}

The original algorithm in \cite{chen2018efficient} is for simulating
the time evolution of the probability density function. The
probability density function is obtained by averaging the conditional
probability density of many independent trajectories of equation
\eqref{SDE}. Since the focus of this paper is the invariant probability density
function, we make some modification to the conditional Gaussian
framework in \cite{chen2018efficient}. The main difference is that we
use one long trajectory to simulate the conditional probability
density. This is because the speed of convergence of the evolution of transient distribution to the invariant distribution of equation
\eqref{SDE} is unknown. In a simulation, we don't know when the probability density
function becomes a satisfactory approximation of the invariant probability density function.

In equation \eqref{eq:CG1}-\eqref{eq:CG2}, the first part
$\boldsymbol{X}_{\mathbf{I}}$ is usually in a relatively low dimension
$n_{\mathbf{I}}$. So for this part, a Monte Carlo approximation is
reliable. Let $\mathfrak{Y}$ be the set of reference points. Denote
the two coordinates of a reference point ${\bm y}_{i} \in
\mathfrak{Y}$ by ${\bm y}_{i}^{\mathbf{I}}$ and ${\bm y}_{i}^{\mathbf{II}}$ 
respectively. Then we run a long numerical trajectory
$\boldsymbol{X}$, for \eqref{eq:CG1}-\eqref{eq:CG2} and evaluate the
trajectory at discrete times $0=t_0<t_1<t_2<\cdots<t_I=T$. Denote the
visiting times of $\boldsymbol{X}_{\mathbf{I}}$ to an $h$-neighborhood of
${\bm y}_{i}^{\mathbf{I}}$ by $t_{k_{1}}, \cdots, t_{k_{S(j)}}$. Then at time
$t_{k_{i}}$, the conditional probability density of at ${\bm
  y}_{i}^{\mathbf{II}}$ is
\begin{equation}\label{eq:CGhighpdf}
\begin{array}{ll}
v_{i,j}
& = f_{(\boldsymbol{y}^{\mathbf{II}}_j(t_{k_{i}})|\boldsymbol{y}^{\mathbf{I}}_j(s), s\leq t_{k_{i}})}(\boldsymbol{y}^{\mathbf{II}}_j)\\
& = \frac{1}{\sqrt{(2\pi)^{n_{\mathbf{II}}}|\boldsymbol{R}_{\mathbf{II}}(t_{k_{i}})|}}\exp(-\frac{1}{2}(\boldsymbol{y}^{\mathbf{II}}_j-\olsi{\boldsymbol{X}}_{\mathbf{II}}(t_{k_{i}}))^T \boldsymbol{R}_{\mathbf{II}}(t_{k_{i}})^{-1}(\boldsymbol{y}^{\mathbf{II}}_j-\olsi{\boldsymbol{X}}_{\mathbf{II}}(t_{k_{i}})))
\end{array}
\end{equation}
according to the Gaussian distribution $N(\olsi{\boldsymbol{X}}_{\mathbf{II}}(t_{k_{i}}),
\boldsymbol{R}_{\mathbf{II}}(t_{k_{i}}))$.This gives
a more reliable approximation for the reference data $v(\boldsymbol{y}_j)$
\begin{equation}\label{eq:CGave}
v(\boldsymbol{y}_j)=\frac{1}{S(j)}\sum_{i=1}^{S(j)}v_{i,j}.
\end{equation}
See Algorithm \ref{Alg:CG} for the full
detail. 

\begin{algorithm}[h]
\caption{Reference data sampling for high dimensional spaces with conditional Gaussian structure}
\label{Alg:CG}
\begin{algorithmic}[1]
\Require
Conditional linear stochastic differential equations \eqref{eq:CG1}
and \eqref{eq:CG2}.
\Ensure
Reference approximation $v(\boldsymbol{y}_j)$ at $\boldsymbol{y}_j=(\boldsymbol{y}^{\mathbf{I}}_j, \boldsymbol{y}^{\mathbf{II}}_j), j = 1, 2, \dots, N^{Y}$.
\State Initialize $\boldsymbol{X}(0)$ and run a numerical simulation
of equation \eqref{eq:CG1}-\eqref{eq:CG2} for sometime $t_0$ and ``burn in''.
\State Reset $\boldsymbol{X}(0)=\boldsymbol{X}(t_0)$
\State Continue the numerical simulation of \eqref{SDE} for a relatively large $T$ and collect $\boldsymbol{X}_{\mathbf{I}}(s), s\leq t$.
\State Run a numerical solver for \eqref{eq:CGexp} and \eqref{eq:CGcov} to get $\olsi{\boldsymbol{X}}_{\mathbf{II}}(t)$ and $\boldsymbol{R}_{\mathbf{II}}(t)$.
	\For {$j = 1$ to $M$}
	\State Record $\boldsymbol{X}(t_{k_{i}}), i=1,2,\dots, S(j) \in B(\boldsymbol{y}^{\mathbf{I}}_j, h)$ in $\mathbb{R}^{n_{\mathbf{I}}}$.
	\State Evaluate $v_{k,j}$ using \eqref{eq:CGhighpdf}.
	\EndFor
\State Return $v(\boldsymbol{y}_j), j = 1, 2, \dots, N^{Y}$ using \eqref{eq:CGave}.
\end{algorithmic}
\end{algorithm}

\section{Numerical simulation details}
\subsection{Parameter of the neural network.}
\label{Detail:NN}
Throughout this paper, we use a small feed-forward neural network with $6$ hidden layers, each
of which contains $16, 128, 128, 128, 16, 4$ neurons respectively, to approximate
the solution to Fokker-Planck equation in all numerical
examples. The output layer always has one neuron. Number of neurons in
the input layer depends on the problem. All activation functions are the sigmoid function. We choose
sigmoid function because (1) the solution of the Fokker-Planck equation is
everywhere nonnegative and (2) the second order derivative of the
neural network output is included in the loss function.

\subsection{Numerical example 1.} \label{Detail:ex1} In Figure \ref{fig1},
the Monte Carlo solution is obtained by running an Euler-Maruyama numerical
scheme for \eqref{eq:2dring} with $10^7$ steps and calculating the
empirical probability on a $200\times200$ mesh of the region
$D=[-2,2]\times[-2,2]$. The time step size is $0.001$. Optimization
problems in equation \eqref{opt} and \eqref{loss_disc} are solved by
linear algebra solvers. The neural network training with loss function \ref{loss} uses all probability
densities at grid points obtained by the same Monte Carlo simulation. The architecture of the artificial neural network is
described in Section \ref{Detail:NN} with two input neuron and one
output neuron. 

In Figure \ref{fig2}, Algorithm \ref{Data collocation sampling} is used to generate reference
points and  training points.  The number of
reference points in six panels of Figure \ref{fig2} are $32, 64, 128,
256, 512$, and $1024$ respectively. The number of training points is
$10000$ in all cases. All 
probability densities $v( {\bm y}_{i})$ at training points are
obtained by Algorithm \ref{Alg:MCforHD}, which runs the Euler-Maruyama
scheme for $10^{8}$ steps. Then we train the artificial
neural network with loss function \eqref{loss}. The architecture of the artificial neural network is
described in Section \ref{Detail:NN} with two input neuron and one
output neuron. The trained neural network is evaluated on a $400\times
400$ grid.

When generating Figure \ref{fig6}, we use the loss function without
$\mathcal{L}u$, so there is no training set $\mathfrak{X}$. In six
panel of Figure \ref{fig6}, the numbers of reference points
$v(\boldsymbol{y}_j)$ with probability densities are
\num{2048}, \num{4096}, \num{8192}, \num{16384}, \num{32768}, and
\num{65536}, respectively. Reference points are obtained by Algorithm \ref{Data collocation sampling}. The probability density at each reference
point is exact (obtained from the Gibbs density). The architecture of the artificial neural network is
described in Section \ref{Detail:NN} with two input neuron and one
output neuron. The trained neural network is evaluated on a $400\times
400$ grid.

\subsection{Numerical example 2.} \label{Detail:ex2} In Figure
\ref{fig:TD1}, Algorithm \ref{Data collocation sampling} is used to generate $1024$
reference points and $10000$ training points. Then we artificially inject some noise into the training
data. For a reference point ${\bm y}_{i}
\in \mathfrak{Y}$, we have $v_i(\boldsymbol{y}_j)=r_{i}(\boldsymbol{y}_j)u(\boldsymbol{y}_j)$,
where $u$ is the Gibbs density, $r_{i}\sim U([1-\alpha, 1+\alpha])$ is a random variable
uniformly distributed in a range $[1-\alpha, 1+\alpha]$. Here,
$\alpha$ controls the ``strength'' of the artificial noise. Four
different values $\alpha = 0.01, 0.05, 0.1$ and
$0.5$ are used to generate four different reference data sets $\{v(
{\bm y}_{j})\}_{j = 1}^{1024}$. The architecture of the artificial neural network is
described in Section \ref{Detail:NN} with two input neuron and one
output neuron. The trained neural network is evaluated on a $400\times
400$ grid.

In Figure \ref{fig3}, the conditional Gaussian simulation is
obtained by  running Algorithm \ref{Alg:CG}. The trajectory is
recorded at $50000$ discrete times. The probability density at
$400\times 400$ grid points are evaluated by using Algorithm
\ref{Alg:CG}. (Two panels in the first column).  Next, Algorithm \ref{Data collocation sampling} is used to generate $1024$
reference points and $10000$ training points. The probability
densities at those training points are evaluated by running Algorithm
\ref{Alg:CG} (same sample size as above) and Algorithm
\ref{Alg:MCforHD}, respectively. Two sets of probability densities at
reference points are used in the neural network training (with loss
function \eqref{loss}) to generate subplots in the second and third
column, respectively. The architecture of the artificial neural network is
described in Section \ref{Detail:NN} with two input neuron and one
output neuron. The trained neural network is evaluated on a $400\times
400$ grid.

\subsection{Numerical example 3.} \label{Detail:ex3} In Figure
\ref{fig4}, algorithm \ref{Data collocation sampling} is used to
sample $10000$ reference points and $10^{5}$ training points in the
domain $D = [-2, 2]^{4}$. Then we run algorithm \ref{Alg:MCforHD} with
$10^{10}$ steps of the Euler-Maruyama scheme to estimate the
probability densities at training points. The values of the probability density function are rescaled on the whole domain $D$ such that
the maximum is $1$ (for otherwise the neural network cannot easily
learn the distinction among small values). Then we train the artificial
neural network with loss function \eqref{loss}. The architecture of the artificial neural network is
described in Section \ref{Detail:NN} with four input neuron and one
output neuron. The trained neural network is evaluated at four
$(x,y)$-slices for $(z,u) = (0,0), (0.5, 0.5), (1,0)$ and $(1,1)$
respectively. Each $(x,y)$ slice contains $400\times 400$ grid
points. 

\subsection{Numerical example 4.} \label{Detail:ex4} In Figure
\ref{fig5}, the numerical domain is $[-3, 3] \times [-3, 0] \times
[-1.5, 0.5] \times  [-0.5, 0.5] \times [-0.5, 0.5]\times [-0.5, 0.5]
\subset \mathbb{R}^{6}$. A direct Monte Carlo simulation that uses $8 \times
10^{9}$ steps of Euler-Maruyama scheme is used to generate
subplots in the first row. The grid size
of the Monte Carlo simpler is $0.05$. Then we use Algorithm \ref{Data collocation sampling} to
sample $20000$ reference points and $10^{5}$ training points in the
domain $D$. Two sets of probability densities at reference points are obtained
using two approaches. The first approach uses Algorithm \ref{Alg:MCforHD} with
$4 \times 10^{10}$ steps of the Euler-Maruyama scheme. The second
approach uses $8 \times 10^{5}$ samples from the conditional Gaussian
sampler in Algorithm \ref{Alg:CG}. The values of the probability density function are rescaled on the whole domain $D$ such that
the maximum is $1$. Then we train two artificial
neural networks (with loss function \eqref{loss}) using the same
collocation points but two sets of probability densities at
reference points. The results are shown in the second and third row of
Figure \ref{fig5} respectively. The architecture of the artificial neural network is
described in Section \ref{Detail:NN} with six input neuron and one
output neuron. The trained neural network is evaluated at five
$(u,v_{i})$-slices for $i = 1, \cdots, 5$ centering at the origin. 

\end{document}